\numberwithin{equation}{section}
\newtheorem{thm}{Theorem}[section]
\newtheorem{pro}[thm]{Proposition}
\newtheorem{lemma}[thm]{Lemma}
\newtheorem{cor}[thm]{Corollary}
\theoremstyle{definition}
\newtheorem{definition}[thm]{Definition}
\newtheorem{remark}[thm]{Remark}
\newtheorem{example}[thm]{Example}
\DeclareMathOperator*{\QF}{QF}
\DeclareMathOperator*{\codim}{codim}
\DeclareMathOperator*{\Vect}{Vect}
\DeclareMathOperator*{\Sing}{Sing}
\DeclareMathOperator*{\Supp}{Supp}
\DeclareMathOperator*{\degree}{degree}
\DeclareMathOperator*{\tor}{tor}
\DeclareMathOperator*{\Hom}{Hom}
\DeclareMathOperator*{\BL}{BL}
\newcommand{\PP}{\mathbb{P}}
\newcommand{\cO}{\mathcal{O}}
\newcommand{\scrF}{\mathcal{F}}
\newcommand{\scrV}{\mathcal{V}}
\newcommand{\Q}{\mathbb{Q}}
\newcommand{\scrp}{\mathfrak{p}} 
\newcommand{\scrq}{\mathfrak{q}} 
\begin{document}

\title[Bertini type results and applications]{Bertini type results and their applications} 

\author[I. Biswas]{Indranil Biswas}

\address{Department of Mathematics, Shiv Nadar University, NH91, Tehsil
Dadri, Greater Noida, Uttar Pradesh 201314, India}

\email{indranil.biswas@snu.edu.in, indranil29@gmail.com}

\author[M. Kumar]{Manish Kumar}

\address{Statistics and Mathematics Unit, Indian Statistical Institute,
Bangalore 560059, India}

\email{manish@isibang.ac.in}

\author[A.J. Parameswaran]{A. J. Parameswaran}

\address{School of Mathematics, Tata Institute of Fundamental
Research, Homi Bhabha Road, Bombay 400005, India}

\email{param@math.tifr.res.in}

\subjclass[2010]{14H30, 14J60}

\keywords{Bertini theorem, Nori fundamental group, formal orbifolds, Lefschetz theorem}

\date{}

\begin{abstract}
We prove Bertini type theorems and give some applications of them. The applications are in the context of 
Lefschetz theorem for Nori fundamental group for normal varieties as well as for geometric formal orbifolds. 
In another application, it is shown that certain class of a smooth quasi-projective variety contains a smooth 
curve such that irreducible lisse $\ell$--adic sheaves on the variety with ``ramification bounded by a branch 
data'' remains irreducible when restricted to the curve.
\end{abstract}

\maketitle

\section{Introduction}

The Bertini theorem asserts that given an embedding of a smooth projective variety $X$ in a projective space 
using the complete linear system of a very ample line bundle on $X$, the intersection of a general hyperplane 
with the image of $X$ is smooth. This is a well known result over fields with arbitrary characteristic. But 
for the more general situation of base point free linear systems, the theorem holds in complete generality 
only when the characteristic of the base field is zero.

We call a dominant morphism $f\,:\, X\, \longrightarrow\, Y$ of varieties over a field $\mathbf k$ generically separable if the induced field extension $k(X)/k(Y)$ is separably generated. For a finite dominant morphism, this condition means that $k(X)/k(Y)$ is a separable field extension. Such morphisms are also called finite separable morphisms. In the case of positive characteristic it is
well-known that even if $f\,:\,X\, \longrightarrow\, Y \subset \PP^N$ is a finite separable dominant morphism with $X$ a smooth
projective variety and $Y$ a projective variety, the pull back of every hyperplane section of $Y$ may be 
singular (see Example \ref{counter-example-normal}). If $f$ happens to be ``residually separable'' then a
Bertini type result holds (see \cite{bertini-CGM}). For Bertini theorem in other contexts, cf. \cite{CP},
\cite{FMZ}.

In this paper we prove Bertini type results under a substantially milder hypothesis. We define 
$A_r\,:=\,A_r(f)$ as the rank $r$ loci of the differential map $df\,:\,TX\, \longrightarrow\, f^*T\PP^N$ and show
that, under a certain hypothesis on these rank loci, the Bertini type results hold (see Theorem 
\ref{main-bertini}). Using this theorem and an inductive argument we obtain the following result.

Let $f\,:\,X\, \longrightarrow\, Y$ be a dominant morphism, where $X$ is of dimension $n$ and
satisfies $R_k$ (i.e., regular in codimension $k$) for some $k\,\le\, n$, $Y\,\subset \,\PP^N$ is a projective variety 
and $H_m$ is a general linear subspace in $\PP^N$ of codimension $m<n$, then the pullback of $H_m$ in $X$ also 
satisfies $R_k$ if certain rank loci are small, more precisely, codimension of $A_{r+j}$ is at least $k-r+1$ 
for all $r\,<\,k$ and $j\,<\,m$ (see Theorem \ref{bertini.linear.subspace}).

As a consequence, we also show that if $f$ is also finite, $X$ and $Y$ are normal and $A_j(F)$ has codimension
at least 2 for $0\,\le\, j\,\le\, n-2$, then the pullback $Y\cap W$ for a general linear subspace $W$ of 
codimension at most $n-1$ is a normal variety (see Lemma \ref{bertini-normal-finite-mor}).

We generalize Lefschetz theorem for Nori fundamental group for normal projective varieties (see Theorem 
\ref{thmnl}). The smooth case was already known \cite{BH}. It should be mentioned that for normal varieties 
only the surjectivity part for the homomorphism between the fundamental groups hold; this is demonstrated by 
an example given in the beginning of Section \ref{sec:lefschetz-normal}.

Lefschetz type theorem for Nori and \'etale fundamental group for a class of projective geometric formal 
orbifolds is also shown (see Theorem \ref{lefschetz-thm}). This is a consequence of the Bertini result and 
Lefschetz theorem for normal varieties. This question was actually raised in \cite{formal.orbifolds}.

Finally, let $X^o$ be a smooth quasiprojective variety and suppose that there exists
a ``super-separable'' $(X,\,P)$
geometric projective formal orbifold (see Definition \ref{super-separable}) containing $X^o$ as open subset 
disjoint with branch locus $\BL(P)$ of $P$. As a consequence of our Bertini result (Lemma 
\ref{bertini-divisor}) we show that there exists a smooth curve $C^o\,\subset\, X^o$ such that the restriction of 
any irreducible lisse $\overline{\Q}_{\ell}$--sheaf on $X^o$ whose ramification is bounded by $P$ to $C^o$ is 
irreducible. This is a variant of a question of Deligne (see \cite[Section 3.2]{esnault}).

\section{Bertini theorem for separable base point free linear systems}

Let $X$ be a projective variety of dimension $n$ defined over an algebraically closed field $\mathbf k$. There
is no assumption on the characteristic of $\mathbf k$. Let $X_{reg}\, \subset\, X$ be the smooth locus of $X$.
Take a dominant morphism
\begin{equation}\label{ef}
f\,:\,X\,\longrightarrow \,Y\,\subset\, {\PP}^N
\end{equation}
onto $Y$. Let
$$
df\,:\,TX\,\longrightarrow\,f^*T\PP^N
$$
be the differential of the map $F$.
The dual projective space parametrizing the hyperplanes in ${\PP}^N$ will
be denoted by $\check{{\PP}}^N$. For each $0\,\leq\, r\,\leq \,n$,
define
\begin{equation}\label{ar}
A_r(f)\,=\,A_r\, :=\, \{x\in X_{reg}~~\mid~~ \{{\rm rank~ of~ f ~at~}~x\}\,:=\,{\rm dim}~df(T_xX)
\,=\,r\} \subset X\, ,
\end{equation}
\begin{equation}\label{br}
B_r\,:=\, \{(x,H)\in A_r\times \check{{\PP}}^N~~\mid~~ df(T_xX)\subset H\} \subset A_r\times \check\PP^N\, ,
\end{equation}
\begin{equation}\label{yr}
Y_r \,:= \,f(A_r)\, .
\end{equation}

Note that the collection $\{A_r\}$ in \eqref{ar}
defines an algebraic stratification of $X$. If $m\,=\, \dim f(X)$ then $A_i$ is empty for all $i\,>\,m$. Moreover, $f$ is generically separable if and only if $A_m$ is open dense subset of $X$. In particular, if the map
$f$ in \eqref{ef} is dominant finite separable morphism, then $A_n$ is an open dense subset of $X$.

For a variety $Z$ of dimension $d$, and an integer $0\, \leq\, k\, \leq\, \dim Z\,=\, d$,
we say that $Z$ satisfies \emph{$R_k$} if $Z$ is regular at all the codimension $k$ points of $Z$.
We note that $Z$ satisfies $R_0$, and it satisfies $R_{d}$ if and only if $Z$ is smooth.

\begin{lemma}\label{lem-a}
Let $H\, \subset\, {\PP}^N$ be a hyperplane.
Let $$f\vert_{f^{-1}(H)}\,:\,f^{-1}(H)\,\longrightarrow\, H$$
be the restriction map. Then $x\,\in\, f^{-1}(H)$ is regular point of $f^{-1}(H)$ if and only if
\begin{itemize}
\item $x\,\in\, X_{reg}$, and

\item $df(T_xX)$ is not contained in $H$.
\end{itemize}
\end{lemma}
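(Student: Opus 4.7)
The plan is to reduce regularity of $F^{-1}(H_1)$ at $x$ to an algebraic question about one local defining equation. Pick an affine open $U\subset \PP^N$ around $F(x)$ on which $H_1$ is cut out by a single linear form $\ell$, and set $f\,:=\,F^*\ell\,=\,\ell\circ F$. Since $F$ is dominant onto $Y$ and we may assume $Y\not\subset H_1$, the function $f$ is a non-zero-divisor in $R\,:=\,\cO_{X,x}$, and $F^{-1}(H_1)$ is defined locally at $x$ by $f\,=\,0$. In particular $\cO_{F^{-1}(H_1),x}\,=\,R/(f)$ has Krull dimension $n-1$.

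Next I would apply the standard cotangent-space computation. Write $\mathfrak{m}$ for the maximal ideal of $R$ and $\mathfrak{n}\,=\,\mathfrak{m}/(f)$ for that of $R/(f)$. Then $\mathfrak{n}/\mathfrak{n}^2\,=\,\mathfrak{m}/(\mathfrak{m}^2+(f))$, so $\dim_k \mathfrak{n}/\mathfrak{n}^2$ equals $\dim_k \mathfrak{m}/\mathfrak{m}^2$ when $f\in\mathfrak{m}^2$, and equals $\dim_k\mathfrak{m}/\mathfrak{m}^2-1$ otherwise. Combined with the bound $\dim_k\mathfrak{m}/\mathfrak{m}^2\,\geq\, n$ (with equality precisely when $X$ is regular at $x$), this gives $R/(f)$ regular if and only if \emph{both} $x\in X_{reg}$ \emph{and} $f\notin \mathfrak{m}^2$, the latter being the condition that the covector $df_x\in T_x^*X$ is nonzero.

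For the last step, the chain rule factors $df_x$ as the composite $T_xX\xrightarrow{dF_x} T_{F(x)}\PP^N\xrightarrow{d\ell_{F(x)}} k$. Its nonvanishing is equivalent to $d\ell_{F(x)}$ not vanishing on $dF_x(T_xX)$, i.e., to $dF(T_xX)\not\subset \ker d\ell_{F(x)}\,=\,T_{F(x)}H_1$; this is how the phrase ``$dF(T_xX)$ is not contained in $H_1$'' must be read. Combining the two equivalences proves the lemma. The argument is essentially bookkeeping, and the only slightly delicate step is the middle dimension computation that forces regularity of $R$ and nonvanishing of $df_x$ simultaneously from a single hypothesis on $\mathfrak{n}/\mathfrak{n}^2$.
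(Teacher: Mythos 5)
Your proposal is correct and follows essentially the same route as the paper's proof: reduce to the single local equation $f=h_1\circ F$, observe that regularity of the hypersurface at a regular point of $X$ amounts to $f\notin\mathfrak m^2$, and translate that via the chain rule into $dF(T_xX)\not\subset T_{F(x)}H_1$. The only difference is that you make the cotangent-space dimension count explicit (which also absorbs the case $x\notin X_{reg}$ that the paper dispatches in one line), so nothing further is needed.
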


\begin{proof}
Let $x\,\in\, X$ be a point. If $x$ is a singular point of $X$, then it is a singular point of $f^{-1}(H)$.

Take a point $x\, \in\, f^{-1}(H)$.
If $x$ is a regular point of $X$, then $x$ is a regular point of $f^{-1}(H)$ if and only if
any element in the local ring ${\mathcal O}_{X,\,x}$ defining
$f^{-1}(H)$ does not lie in the square of the maximal ideal of $\cO_{X,\,x}$.
Denote by $h$ an element in the local ring of ${\PP}^N$ at $f(x)$ defining $H$. Then the function defining
$f^{-1}(H)$ in $X$ is the composition $h\circ f$. By chain rule, the differential of $h\circ f$ is zero
if and only if $df(T_xX)$ is contained in $T_{f(x)}H$. Here by abuse of notation, we identify
the tangent space of any point of a linear space with the linear space itself. Now we see that
$h\circ f$ is in the square of the maximal ideal of ${\mathcal O}_{X,\,x}$ if and only if $df(T_xX)$ is contained in $H$.
This proves the lemma.
\end{proof}

Our first main result is the following.

\begin{thm}\label{main-bertini}
If $X$ satisfies $R_k$ for some $1\,\leq\, k\,<\, n$, then the pullback of a general hyperplane satisfies
$R_k$, provided for each $r\, < \,k$, and each irreducible component $A'_r$ of $A_r$ in \eqref{ar},
\begin{itemize}
\item either $\codim(A'_r)\,>\, k-r$, or

\item $A'_r$ is contained in a fiber of $f$.
\end{itemize}
\end{thm}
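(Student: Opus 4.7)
The plan is to use Lemma \ref{lem-a} to decompose $\Sing(F^{-1}(H))$ into two pieces and bound each via incidence-and-dimension counts. Explicitly,
\[
\Sing(F^{-1}(H)) \,\subseteq\, \bigl(X_{sing}\cap F^{-1}(H)\bigr) \,\cup\, \{x\in X_{reg}\cap F^{-1}(H) \mid dF(T_xX)\subset T_{F(x)}H\},
\]
and since $\dim F^{-1}(H)=n-1$, verifying $R_k$ for $F^{-1}(H)$ reduces to showing the right-hand side has dimension at most $n-k-2$ for a general $H$.

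For the first piece, I would use the incidence $\{(x,H)\in X_{sing}\times\check{\PP}^N \mid F(x)\in H\}$. The $R_k$ hypothesis gives $\dim X_{sing}\leq n-k-1$, so this incidence has dimension at most $(n-k-1)+(N-1)$; projecting to $\check{\PP}^N$, the fiber over a general $H$ has dimension at most $n-k-2$.

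For the second piece, I would stratify using the strata $A_r$ of \eqref{ar} and the incidence varieties $B_r$ of \eqref{br}. The fiber of $B_r\to A_r$ over $x\in A_r$ consists of hyperplanes in $\PP^N$ containing the $r$-plane through $F(x)$ with tangent directions $dF(T_xX)$, so it has dimension $N-r-1$. Hence $\dim B_r\leq\dim A_r+N-r-1$, and the general fiber of $B_r\to\check{\PP}^N$ has dimension at most $\dim A_r-r-1$. For $r<k$ and each component $A'_r$, the hypothesis splits into two cases: either $\codim A'_r>k-r$, whence $\dim A'_r\leq n-k+r-1$ and the general fiber has dimension at most $n-k-2$; or $A'_r\subset F^{-1}(y_0)$ for some $y_0\in Y$, and then for general $H$ one has $y_0\notin H$, making the contribution empty. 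For $r\geq k$ no extra hypothesis is needed: if $r<m:=\dim F(X)$ then $\dim A_r\leq n-1$, yielding a general fiber of dimension at most $n-r-2\leq n-k-2$; and if $r=m>k$ then $\dim A_m=n$ still gives $n-m-1\leq n-k-2$.

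The main obstacle is the borderline case $r=k=m$, where the naive dimension count yields only $n-k-1$. Here I would split $A_k$ into $A_k\cap F^{-1}(Y_{reg})$ and $A_k\cap F^{-1}(Y_{sing})$. On the former, the rank condition forces $dF(T_xX)=T_{F(x)}Y$, so the tangent condition becomes $T_{F(x)}Y\subset T_{F(x)}H$, and classical Bertini applied to the embedding $Y\hookrightarrow\PP^N$ makes this set empty for general $H$. On the latter, since $F^{-1}(Y_{sing})$ is a proper closed subset of $X$, it has dimension at most $n-1$, and reapplying the incidence-and-dimension count to this subvariety produces the bound $n-k-2$. Combining the contributions yields $\dim\Sing(F^{-1}(H))\leq n-k-2$ for general $H$, as required.
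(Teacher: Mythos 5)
Your proof is correct and follows the same strategy as the paper's: Lemma \ref{lem-a} reduces everything to fiber-dimension counts for the incidence correspondences $B_r$ of \eqref{br} over $\check{\PP}^N$, and the two hypotheses on the components $A'_r$ with $r<k$ enter exactly as in the paper (small $\codim$ forces a small general fiber of $p_2$; containment in a fiber of $F$ makes $p_2\vert_{B'_r}$ non-dominant). Where you go beyond the paper is in the strata with $r\ge k$, and your extra care there is warranted rather than redundant. The paper disposes of these strata by asserting $n-a_r+r>k$ ``because $n-a_r>0$,'' but that inequality fails for the open dense stratum $A_m$ with $m=\dim F(X)$, which has $a_m=n$; when $m=k$ the paper's count only gives codimension $k$ for the singular locus, one short of what $R_k$ requires. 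Your resolution --- splitting $A_k$ into $A_k\cap F^{-1}(Y_{reg})$, where the rank condition forces $dF(T_xX)=T_{F(x)}Y$ so that the offending hyperplanes lie in the dual variety of $Y$, a proper closed subset of $\check{\PP}^N$, and $A_k\cap F^{-1}(Y_{sing})$, which is a proper closed subset of $X$ and so improves the count by one --- is exactly the argument needed to close this case, and it is the same mechanism the paper itself deploys later in Corollary \ref{residually-separable}. Your explicit bookkeeping for the $X_{sing}$ piece, which the paper leaves implicit, is also correct.
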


\begin{proof}
We denote the component $p^{-1}_1(A'_r)$ of $B_r$ (defined in \eqref{br}) by $B'_r$. The dimension of $A'_r$ will be
denoted by $a_r$.

Notice that $f^{-1}(H)$ is smooth at $x\,\in\, A'_r$ if and only if $(x,\,H)\,\notin\, B'_r$ by Lemma \ref{lem-a}.
%Equivalently, $f$ is transversal to $H$ at $x\,\in\, A'_r$ if and only $(x,\,H)\,\notin\, B'_r$. 
Hence the singular
locus of $f^{-1}(H)$ along $A'_r$ coincides with the fiber of the restriction of the second projection
$p_2\,:\,A'_r\times \check \PP^N \,\longrightarrow\, \check{{\PP}}^N$ to $B'_r\, \subset\, A'_r\times \check \PP^N$, 
in other words,
$$\Sing(f^{-1}(H))\cap A'_r\,=\, \{x\,\in\, A'_r\,\mid\, (x,\,H)\,\in\, B'_r\}\, .$$ Note that the projection
$p_1\, :\, B'_r \,\longrightarrow\, A'_r$ is a smooth fibration of relative dimension $N-r-1$.
Consequently, we have $\dim B'_r\,=\,a_r + N-r-1$.

Consider the image $p_2(B'_r)\, \subset\, \check{{\PP}}^N$
of $B'_r$ under the second projection $p_2$. Now there are the following two possibilities:

Case I:\, $ \dim p_2(B'_r)\, <\, N$. 

Case II:\, $\dim p_2(B'_r)\,=\,N$.

Assume that Case I holds. Then for a general hyperplane $H
\,\in\,\check{{\PP}}^N$ which is not in the closure of
$p_2(B'_r)$, the inverse image $f^{-1}(H)$ is smooth at all points in $A'_r\cap f^{-1}(H)$. Note
that if $A'_r$ is contained in a fiber of $f$, then the restriction $p_2\vert_{B'_r}\, :\,B'_r\, \longrightarrow\,
\check{{\PP}}^N$ is not dominant.

Next assume that Case II holds. Then the general fiber of $p_2\vert_{B'_r}$ has dimension $$a_r+N-r-1-N\,=\, a_r-r-1
\, \geq\, 0\, .$$
This shows that the codimension of the singular locus in
$f^{-1}(H)$ is at least $n-1-(a_r-r-1)\,=\,n-a_r+r \,=\, n-(a_r-r)$. So if $r\,\geq\, k$, then $n-a_r+r
\,>\, k$, because $n-a_r\,>\,0$. Hence $f^{-1}(H)$ always satisfies $R_k$ along the subset $A'_r$
for $r\,\ge\, k$. If $r\,<\,k$, then the statement that $f^{-1}(H)$ is
regular in codimension $k$ at the points of $A'_r\cap f^{-1}(H)$ is equivalent to
the statement that $n-a_r+r \,>\, k$, i.e., $n-a_r \,>\, k-r$.

Consequently, if one of the following two holds:
\begin{itemize}
\item $\codim_X(A'_r)\,> \,k-r$,

\item $A'_r$ is contained in a fiber,
\end{itemize}
then $f^{-1}(H)$ is regular in codimension $k$.
\end{proof}

The following result is an improvement on the existing results in the literature \cite{bertini-Spreafico}, 
\cite{bertini-CGM}.

\begin{cor}\label{residually-separable}
Let $f_r\,:=\,f\vert_{A_r(F)}$, and assume that $f_r\,:\,A_r(f)\,\longrightarrow\, Y_r$
(defined in \eqref{yr}) is generically separable (i.e., the residue field extension induced by $f$ at the generic points of $A_r(f)$ are separably generated field extension) for all $r$.
If $X$ satisfies $R_k$, then $f^{-1}(H)$ satisfies $R_k$ for general $H$. By induction, a general complete
intersection is also regular in codimension $k$.
\end{cor}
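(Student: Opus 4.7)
The plan is to revisit the Case I / Case II dichotomy in the proof of Theorem \ref{main-bertini} and show that, under the residual separability hypothesis, Case I always applies, for every irreducible component $A'_r$ of every rank locus $A_r(F)$. Since Case I is strictly stronger than the codimension bound demanded in Theorem \ref{main-bertini}, this will give the $R_k$ conclusion without any further hypothesis on codimensions.

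Fix an irreducible component $A'_r$ of $A_r(F)$, and set $Y'_r := F(A'_r)\subset Y_r$. The restriction $F|_{A'_r}\colon A'_r\to Y'_r$ inherits separability from $f_r$. At a generic smooth point $x\in A'_r$ with $F(x)$ smooth in $Y'_r$, the differential $d(F|_{A'_r})_x$ is surjective onto $T_{F(x)}Y'_r$, and since it equals the restriction of $dF_x$ to $T_xA'_r$ this yields
\[
T_{F(x)}Y'_r \;=\; dF(T_xA'_r) \;\subset\; dF(T_xX).
\]
Hence every hyperplane $H$ with $dF(T_xX)\subset H$ automatically satisfies $T_{F(x)}Y'_r\subset H$. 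Taking closures over the open dense locus on which the above holds,
\[
p_2(B'_r) \;\subset\; \overline{\bigl\{H\in\check{\PP}^N \;\bigm|\; T_yY'_r\subset H \text{ for some smooth } y\in Y'_r\bigr\}}.
\]
The right-hand side is the image in $\check{\PP}^N$ of the incidence variety $\{(y,H) : y\in (Y'_r)^{\mathrm{sm}},\ T_yY'_r\subset H\}$, whose dimension is $\dim Y'_r + (N-1-\dim Y'_r)=N-1$. Thus $p_2|_{B'_r}$ is not dominant, which is Case I of the proof of Theorem \ref{main-bertini}.

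A general $H\in\check{\PP}^N$ avoiding the union of the (finitely many) proper closed subsets $\overline{p_2(B'_r)}$ therefore satisfies, by Lemma \ref{lem-a}, that $F^{-1}(H)$ is smooth at every point of $F^{-1}(H)\cap X_{reg}$. Since $X$ is $R_k$, each component of the singular locus $X\setminus X_{reg}$ has codimension $\ge k+1$, and a standard transversality argument shows that for generic $H$ the intersection $(X\setminus X_{reg})\cap F^{-1}(H)$ has codimension $\ge k+1>k$ in $F^{-1}(H)$. The singular locus of $F^{-1}(H)$ is contained in this intersection, so $F^{-1}(H)$ is $R_k$.

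For the complete-intersection statement I induct on the number of hyperplanes, applying the corollary at each step to $F|_{F^{-1}(H)}\colon F^{-1}(H)\to Y\cap H$. The rank strata of this restricted morphism essentially coincide with $A_{s+1}(F)\cap F^{-1}(H)_{reg}$ (the rank indices shift down by one under the cut), and residual separability should be preserved under generic hyperplane restriction by a standard Bertini-for-separability argument applied to each $f_r$. The principal obstacle I anticipate is precisely this inductive step: verifying that residual separability propagates to the restricted morphism and handling edge cases in which a rank stratum becomes empty or its image collapses to lower dimension after cutting.
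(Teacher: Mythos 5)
Your argument is correct and is essentially the paper's own proof: both show that residual separability forces every hyperplane containing $dF(T_xX)$ to contain the tangent space of $Y_r$ at $f_r(x)$, so $p_2(B_r)$ lands in the dual variety of $Y_r$ (dimension at most $N-1$), ruling out Case II of Theorem \ref{main-bertini}. Your extra remarks on the singular locus of $X$ and the caveats about the inductive step for complete intersections are reasonable elaborations of points the paper leaves implicit.
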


\begin{proof}
Under the given hypothesis, we will show that Case II in the proof of Theorem \ref{main-bertini} does not 
occur. Since $f_r\,:\,A_r(f)\,\longrightarrow\, Y_r$ is generically separable, at a general smooth point $x$ of $A_r(f)$
the differential $df_r$ is surjective onto the tangent space of $Y_r$. Hence any hyperplane $H$ that contains 
the image of differential $df(T_x X)$ will contain the image $df_r(T_xA_r(f))$ of the differential of $f_r$,
and consequently it contains the tangent space to $Y_r$ at $f_r(x)$. Therefore, the image of $B_r$ is 
contained in the dual variety of $Y_r$. But $Y_r$ is a proper subvariety of $\PP^N$, and hence its dual 
variety is a proper subvariety of $\check{{\PP}}^N$. Therefore, we conclude that $B_r$ cannot surject onto 
$\check{{\PP}}^N$. This proves that $f^{-1}(H)$ is regular in codimension $k$ along $A_r(f)$.
\end{proof}

\begin{remark}
The difficulty for the converse of Theorem \ref{main-bertini}
stems from the lack of understanding of the behavior of the differential images along $Y_r$. However,
this difficulty is absent when $r\,=\,0$. For $r\,>\,0$, the separability helps as we
know the behavior of tangent space variations from classical Lefschetz Theory.
\end{remark}

For instance only a knowledge of the behavior of the rank zero locus is needed in order to
get information on the $R_1$ property of a general hyperplane section. More precisely, we need
to show that every divisorial component of $A_0$ is contained in a fiber. In 
this case we also have the converse.

\begin{cor}\label{criteria.R_1}
Let $f\,:\,X\,\longrightarrow\, \PP^N$ be a morphism of projective varieties.
If $X$ satisfies $R_1$, then the pullback of a general hyperplane in $\PP^N$ satisfies $R_1$ if and only
if every $n-1$ dimensional component of $A_0(f)$ is contained in some fiber of $f$.
\end{cor}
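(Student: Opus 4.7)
The plan is to handle the two implications separately. The forward direction ($\Leftarrow$) is a direct specialization of Theorem \ref{main-bertini} to $k=1$: the only relevant index is $r=0$, and for $r=0$ the disjunction ``$\codim(A'_0) > 1$ or $A'_0$ is contained in a fiber'' is exactly ``$\dim A'_0 \le n-2$ or $A'_0$ is contained in a fiber of $F$''. This is equivalent to the hypothesis that every $(n-1)$-dimensional component of $A_0(F)$ lies in a fiber, so Theorem \ref{main-bertini} delivers $R_1$ for the general pullback $F^{-1}(H)$.

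For the converse ($\Rightarrow$) I would argue by contraposition. Assume some $(n-1)$-dimensional irreducible component $A'_0$ of $A_0(F)$ is not contained in any fiber of $F$; the goal is to exhibit, for a general hyperplane $H$, a codimension-$1$ singular component of $F^{-1}(H)$. Since $A'_0$ is irreducible and $F|_{A'_0}$ is non-constant, the image $W := F(A'_0)\subset \PP^N$ has dimension $d\ge 1$. The key step is to revisit Case II of the proof of Theorem \ref{main-bertini} with $r=0$. Because $dF(T_xX)=0$ on $A_0(F)$, the variety $B'_0$ coincides with the incidence $\{(x,H)\in A'_0\times \check\PP^N : F(x)\in H\}$, which has dimension $n+N-2$, and its image under $p_2$ equals $\{H : W\cap H\ne\emptyset\} = \check\PP^N$ since $\dim W\ge 1$. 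Hence we are genuinely in Case II, and the generic fiber of $p_2|_{B'_0}$ has dimension exactly $n-2$. This fiber is naturally identified with $A'_0\cap F^{-1}(H)$, so the latter has an irreducible component of dimension $n-2$ for general $H$.

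Now Lemma \ref{lem-a} forces every point of $A'_0\cap F^{-1}(H)$ to be a singular point of $F^{-1}(H)$, since such a point lies in $X_{reg}$ and trivially satisfies $dF(T_xX)=0 \subset H$. Thus the singular locus of $F^{-1}(H)$ contains a closed subset of dimension $n-2$, which is codimension $1$ in $F^{-1}(H)$ (of dimension $n-1$); consequently $F^{-1}(H)$ fails $R_1$, completing the contrapositive. I do not anticipate a serious obstacle: the converse is essentially Case II of Theorem \ref{main-bertini} run in reverse, exploiting the fact that the codimension bound $n-a_r+r$ established there is sharp and equals $1$ precisely when $r=0$ and $a_r=n-1$, which is exactly the borderline case excluded from the hypothesis of the corollary. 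The only minor care needed is to ensure that the $(n-2)$-dimensional piece $A'_0\cap F^{-1}(H)$ produces a genuine codimension-$1$ component of the singular locus, which is automatic since it is itself a closed subset of dimension $n-2$ in the $(n-1)$-dimensional variety $F^{-1}(H)$.
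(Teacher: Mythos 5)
Your proposal is correct and follows essentially the same route as the paper: the forward direction is the $k=1$ case of Theorem \ref{main-bertini}, and the converse rests on the observation (via Lemma \ref{lem-a}) that every point of $A_0(F)\cap F^{-1}(H)$ is singular in $F^{-1}(H)$, combined with the fact that a general hyperplane meets a divisorial component of $A_0(F)$ not contained in a fiber in a set of codimension one. Your use of the incidence variety $B'_0$ merely makes explicit the dimension count that the paper states directly, so there is nothing further to add.
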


\begin{proof} 
By Theorem \ref{main-bertini}, if either every component of $A_0(f)$ has codimension at least two,
or the divisorial components of $A_0(f)$ are contained in a fiber, then $f^{-1}(H)$ satisfies $R_1$ for a
general hyperplane $H$. To prove the 
converse it suffices to show that if $Y_0$ (defined in \eqref{yr}) has positive dimension, and $f^{-1}(H)$ satisfies $R_1$, then
$\codim(A_0(f))\,>\,1$. Notice that $A_0(f)\bigcap f^{-1}(H)$ is always a singular point of $f^{-1}(H)$, and since
$Y_0\bigcap H\,\neq\, \emptyset$ for a general $H$, regularity in codimension one of $f^{-1}(H)$ implies
that $A_0(f)\bigcap
f^{-1}(H)$ has codimension at least two in $f^{-1}(H)$. But the codimension of $A_0(f)\bigcap f^{-1}(H)$ in $f^{-1}(H)$
coincides with the codimension of $A_0(f)$ in $X$ as $H$ is a general hyperplane. Hence $A_0(f)$ is of codimension
at least two in $X$.
\end{proof}

Next we look for sufficient conditions for a complete intersection to be $R_k$. The statement is formulated in 
terms of the codimension of the differential rank loci $A_r(f)$ for various $r$. We need to understand the
differential rank of the restriction of $f$ on $f^{-1}(H)\,\longrightarrow\, H$. As before, let
$A_r(f)$ be the regular points of $X$, where $r$ is the rank of $f$.

\begin{lemma}\label{lem-b}
Let $H\, \subset\, {\PP}^N$ be a hyperplane.
Let $$f_H\,:=\,f\vert_{f^{-1}(H)}\,:\,f^{-1}(H)\,\longrightarrow\, H$$ be the restriction
map. Then $A_r(f_H)\,=\,A_{r+1}(f)\bigcap f^{-1}(H)$.
\end{lemma}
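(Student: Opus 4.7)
The plan is to carry out a local linear-algebra computation at a point $x\in F^{-1}(H_1)$, comparing the rank of $dF_1$ at $x$ with that of $dF$ at $x$, using Lemma~\ref{lem-a} to handle the regularity of $F^{-1}(H_1)$.

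First I would recall from Lemma~\ref{lem-a} that a point $x\in F^{-1}(H_1)$ lies in $F^{-1}(H_1)_{reg}$ if and only if $x\in X_{reg}$ and $dF_x(T_xX)\not\subset T_{F(x)}H_1$ (with the standard identification between the tangent space of a linear subspace at any of its points and the linear subspace itself). At such a point, the function $h_1\circ F$, where $h_1$ is a local equation for $H_1$, has nonzero differential at $x$, and hence
$$T_xF^{-1}(H_1)\,=\,\ker d(h_1\circ F)_x\,=\,(dF_x)^{-1}\bigl(T_{F(x)}H_1\bigr).$$

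Next I would compute the image of $(dF_1)_x$. Since $F_1$ is the restriction of $F$ to $F^{-1}(H_1)$, its differential at $x$ is just the restriction of $dF_x$ to the subspace $T_xF^{-1}(H_1)=(dF_x)^{-1}(T_{F(x)}H_1)$, so its image is
$$dF_x(T_xX)\,\cap\,T_{F(x)}H_1.$$
Because $T_{F(x)}H_1$ is a hyperplane in $T_{F(x)}\PP^N$ and $dF_x(T_xX)$ is not contained in it (by regularity), this intersection has dimension exactly $\dim dF_x(T_xX)-1$. Thus the rank of $dF_1$ at $x$ is one less than the rank of $dF$ at $x$.

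Combining these, $x\in A_r(F_1)$ iff $x$ is regular on $F^{-1}(H_1)$ (hence $x\in X_{reg}$) and the rank of $dF_1$ at $x$ equals $r$, which by the rank formula is equivalent to $dF$ having rank $r+1$ at $x$, i.e., $x\in A_{r+1}(F)\cap F^{-1}(H_1)$. The entire argument is a routine codimension-one dimension count, so no real obstacle arises; the only subtlety worth flagging is to use Lemma~\ref{lem-a} consistently to translate between the geometric regularity of $F^{-1}(H_1)$ at $x$ and the tangent-space non-containment condition $dF_x(T_xX)\not\subset T_{F(x)}H_1$, so that no spurious points on either side are overlooked.
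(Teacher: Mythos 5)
Your proof is correct and follows essentially the same route as the paper: apply Lemma~\ref{lem-a} to characterize the regular points of $F^{-1}(H_1)$, identify $dF_1(T_x(F^{-1}(H_1)))$ with $dF(T_xX)\cap T_{F(x)}H_1$, and conclude that the rank drops by exactly one. The only difference is that you spell out the codimension-one dimension count that the paper leaves implicit.
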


\begin{proof}
Take a point $x\,\in\, f^{-1}(H)$. From Lemma~\ref{lem-a} we
know that $x$ is a regular point of $f^{-1}(H)$ if and only if
\begin{itemize}
\item $x$ is a regular point of $X$, and

\item $df(T_xX)$ is not contained in $H$.
\end{itemize}
The differentials $df_H$ and $df$ of the maps $f_H$ and $f$
respectively satisfy the condition
$$
df_H(T_x (f^{-1}(H))) \,=\, df(T_xX)\bigcap T_{f(x)}H\,.
$$
Consequently, $y\, \in\, A_r(f_H)$ if and only if
\begin{itemize}
\item $y$ is a regular point of $f^{-1}(H)$, and

\item the rank of $df$ at $y$ is $r+1$.
\end{itemize}
Hence we conclude that $A_r(f_H)\,=\,A_{r+1}(f)\bigcap f^{-1}(H)$.
\end{proof}

\begin{thm}\label{bertini.linear.subspace}
If $X$ satisfies $R_k$ for some $1\,\leq\, k\,\leq\, n$, then the pullback of a general linear subspace of
codimension $m\,<\,n$ satisfies $R_k$ provided for each $r <\, k$ and
each $0\, \leq\, j\, <\, m$, at least one of the following two conditions hold:
\begin{enumerate}
\item The codimension of the stratum $A_{r+j}(f)\, \subset\, X$ is at least $k-r+1$.

\item The dimension of $Y_{r+j}$ in \eqref{yr} is at most $j$.
\end{enumerate}
\end{thm}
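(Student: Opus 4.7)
The proof proceeds by induction on $m$. The base case $m=1$ is a direct application of Theorem \ref{main-bertini}: at $j=0$, the hypothesis asserts that for each $r<k$, either $\codim_X(A_r)\geq k-r+1$ (equivalently, $\codim_X(A_r)>k-r$) or $\dim Y_r\leq 0$. In the first case every irreducible component $A'_r$ satisfies the first bullet of Theorem \ref{main-bertini}. In the second, $Y_r$ is a finite set of points, and every irreducible component of $A_r$ maps to a single point of $Y_r$ and is therefore contained in a fiber of $F$, yielding the second bullet. Hence Theorem \ref{main-bertini} applies.

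For the inductive step, assume the statement for codimension $m-1$, and let $H_m$ be a general codimension-$m$ linear subspace in $\PP^N$. Decompose $H_m=H_1\cap H_{m-1}$, where $H_1$ is a general hyperplane in $\PP^N$ and $H_{m-1}\subset H_1\cong\PP^{N-1}$ is a general codimension-$(m-1)$ linear subspace. Let $F_1:=F|_{F^{-1}(H_1)}\colon F^{-1}(H_1)\longrightarrow Y\cap H_1\subset H_1$ be the restriction of $F$; then $F^{-1}(H_m)=F_1^{-1}(H_{m-1})$. The plan is first to apply Theorem \ref{main-bertini} (exactly as in the base case, using the hypothesis at $j=0$) to conclude that $F^{-1}(H_1)$ satisfies $R_k$, and then to apply the inductive hypothesis to $F_1$ with the codimension-$(m-1)$ subspace $H_{m-1}$ to conclude that $F_1^{-1}(H_{m-1})=F^{-1}(H_m)$ satisfies $R_k$.

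To justify the inductive step, one verifies the hypothesis of the theorem for $F_1$: for each $r<k$ and each $0\leq j'<m-1$, either $\codim_{F^{-1}(H_1)}(A_{r+j'}(F_1))\geq k-r+1$ or $\dim Y_{r+j'}(F_1)\leq j'$. By Lemma \ref{lem-b}, $A_{r+j'}(F_1)=A_{r+j'+1}(F)\cap F^{-1}(H_1)$, and consequently $Y_{r+j'}(F_1)\subset Y_{r+j'+1}(F)\cap H_1$. For a general $H_1$, every positive-dimensional-image component of $A_{r+j'+1}(F)$ is cut by $F^{-1}(H_1)$ in pure codimension one, while components whose image is a point may be avoided by choosing $H_1$ to miss the finitely many such points. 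Thus
\[
\codim_{F^{-1}(H_1)}\bigl(A_{r+j'+1}(F)\cap F^{-1}(H_1)\bigr)\;\geq\;\codim_X\bigl(A_{r+j'+1}(F)\bigr),
\]
and $\dim\bigl(Y_{r+j'+1}(F)\cap H_1\bigr)\leq\dim Y_{r+j'+1}(F)-1$. Writing $j=j'+1$, so that $1\leq j<m$, condition (1) of the theorem at $(r,j)$ for $F$ yields the first alternative for $F_1$, while condition (2) at $(r,j)$ yields $\dim Y_{r+j'}(F_1)\leq j-1=j'$, i.e., the second alternative for $F_1$. Combined with the $j=0$ case used in the first step, the original hypothesis for $F$ therefore provides both the Bertini input and the full inductive input.

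The main obstacle is the codimension bookkeeping in the previous paragraph: one must check that a general hyperplane $H_1$ meets each of the finitely many strata $A_s(F)$ and image loci $Y_s(F)$ in the expected dimension, so that the conditions at level $j=j'+1$ for $F$ descend to the corresponding conditions at level $j'$ for $F_1$. This is a standard transversality argument, since the set of ``bad'' hyperplanes (those containing some $Y_s(F)$, or some point at which the expected dimension is not achieved) is a proper closed subvariety of $\check{\PP}^N$. Once this is in place, the two-step induction, consisting of Theorem \ref{main-bertini} applied to $F$ together with the inductive hypothesis applied to $F_1$, completes the proof.
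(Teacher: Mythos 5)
Your proof is correct and follows essentially the same route as the paper: induction on $m$, cutting by one general hyperplane at a time, using Lemma \ref{lem-b} to identify $A_{r+j'}(F_1)$ with $A_{r+j'+1}(F)\cap F^{-1}(H_1)$ so that the two hypotheses descend with the index $j$ shifted down by one, and invoking Theorem \ref{main-bertini} for the first cut. The only point the paper adds that you should include is a one-line justification that $F^{-1}(H_1)$ is again an irreducible variety (it is dominated by the pullback of $H_1$ to the normalization of $X$), so that the inductive hypothesis legitimately applies to $F_1$.
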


\begin{proof}
The proof will be carried out using induction on $m$. Choose a general hyperplane $H$ in ${\PP}^N$. Then the
codimension of $A_r(f_H)$ in $f^{-1}(H)$ coincides with the codimension of $A_{r+1}(f)$ in $X$,
because we may assume that $A_{r+1}(f)$ is not contained in $f^{-1}(H)$.
Note that $A_{r+j}(f_H)\,=\,A_{r+j+1}(f)\bigcap f^{-1}(H)$ for all $j\,\geq\, 0$.
Hence for each $r\,<\,k$ and $j\,<\, m-1$, we --- by assumption --- have either
$${\rm Codim}_{f^{-1}(H)}(A_{r+j}(f_H)) \,=\, {\rm Codim}_X(A_{r+j+1}(f)) \,\geq\, k-r+2 \,>\, k-r+1 $$
or
$$
\dim f_H(A_{r+j}(f_H)) \,=\, \dim f_H(A_{r+j+1}(f) \cap f^{-1}(H))
$$
$$
=\,\dim f_H(A_{r+j+1}(f)) \cap H \,\leq\, (j+1)-1
\,=\,j\, .
$$
By Theorem \ref{main-bertini}, $$X_1\,:=\, f^{-1}(H)$$ satisfies $R_k$, and since $X_1$ is dominated by the
variety obtained by pulling back $H$ to the normalization of $X$, we conclude that $X_1$ itself is a variety.
Now applying induction on $n\,=\, \dim X$ for a general hyperplane $H$, the theorem follows.
\end{proof}

\begin{cor}
Let $X$ satisfy $R_1$. Then the pullback of a general linear subspace of codimension $m$ is $R_1$ if
and only if for each $0\, \leq\, j\, <\, m$, at least one of the following two holds:
\begin{enumerate}
\item The codimension of the stratum $A_j(f)\, \subset\, X$ is at least two.

\item $\dim Y_j\, \leq\, j$.
\end{enumerate}
\end{cor}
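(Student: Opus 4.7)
The $(\Leftarrow)$ direction is immediate from Theorem \ref{bertini.linear.subspace} specialized to $k=1$: the only value of $r$ with $r<k$ is $r=0$, so the theorem's hypothesis reduces to ``for each $0\le j<m$, either $\codim A_j\ge 2$ or $\dim Y_j\le j$,'' which is precisely the assumption of the corollary.

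For the $(\Rightarrow)$ direction I would argue by induction on $m$. The base case $m=1$ is Corollary \ref{criteria.R_1}, which asserts that $F^{-1}(H)$ is $R_1$ for a general hyperplane $H$ exactly when every divisorial component of $A_0(F)$ lies in a fiber of $F$; this translates into the corollary's alternative ``$\codim A_0\ge 2$ or $\dim Y_0\le 0$'' at $j=0$. For the inductive step, write a general codimension $m$ linear subspace as $H_m=H_1\cap H_{m-1}'$, where $H_1\subset\PP^N$ is a general hyperplane and $H_{m-1}'\subset H_1$ a general codimension $m-1$ linear subspace of $H_1$, and set $X_1:=F^{-1}(H_1)$ and $F_1:=F|_{X_1}\colon X_1\to H_1$, so that $F^{-1}(H_m)=F_1^{-1}(H_{m-1}')$.

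The crucial preliminary is to check that the $R_1$ property of $F^{-1}(H_m)$ descends to $X_1$: by Lemma \ref{lem-a}, a singular point $x$ of $X_1$ either lies in $X_{\mathrm{sing}}$ or satisfies $dF(T_xX)\subset T_{F(x)}H_1$, and since $T_{F(x)}H_m\subset T_{F(x)}H_1$, any such $x\in F^{-1}(H_m)$ is automatically singular in $F^{-1}(H_m)$. Generic transversality of $H_{m-1}'$ with $\Sing X_1$ then shows that a codimension-one component of $\Sing X_1$ would cut out a codimension-one singular locus of $F^{-1}(H_m)$, contradicting $R_1$; hence $X_1$ is $R_1$. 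Applying Corollary \ref{criteria.R_1} directly to $F$ now yields the corollary's condition at $j=0$. For the remaining indices $1\le j<m$, the inductive hypothesis applied to $F_1\colon X_1\to H_1$ with codimension $m-1$ gives, for each $0\le j'<m-1$, either $\codim_{X_1}A_{j'}(F_1)\ge 2$ or $\dim F_1(A_{j'}(F_1))\le j'$. Via Lemma \ref{lem-b} these correspond, under the reindexing $j=j'+1$, to the analogous codimension and dimension conditions for $1\le j<m$, since for general $H_1$ codimensions are preserved and cutting a positive-dimensional $Y_{j'+1}$ with $H_1$ drops its dimension by exactly one.

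The principal difficulty is the preliminary descent step that $R_1$ of $F^{-1}(H_m)$ forces $R_1$ of $X_1$; the rest of the induction amounts to bookkeeping via Lemma \ref{lem-b} and Corollary \ref{criteria.R_1}. A secondary care point is that the two alternatives should be interpreted with attention to the stratification of $A_j$ into irreducible components, as is already implicit in the per-component formulation of Theorem \ref{main-bertini}.
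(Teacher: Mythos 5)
Your $(\Leftarrow)$ direction is exactly the paper's (Theorem \ref{bertini.linear.subspace} specialized to $k=1$), and the bookkeeping via Lemma \ref{lem-b} relating the conditions at index $j'$ for $F_1$ to those at index $j'+1$ for $F$ is sound. The problem lies in the step you yourself single out as crucial: the descent of $R_1$ from $F^{-1}(H_m)$ to $X_1=F^{-1}(H_1)$. The inclusion $\Sing(X_1)\cap F^{-1}(H_m)\subset \Sing(F^{-1}(H_m))$ is correct, but ``generic transversality of $H_{m-1}'$ with $\Sing X_1$'' does not deliver what you need. If $S$ is a divisorial component of $\Sing(X_1)$, then $S\cap F^{-1}(H_m)=(F_1|_S)^{-1}\bigl(F_1(S)\cap H_{m-1}'\bigr)$, and whenever $\dim F_1(S)<m-1$ a general $H_{m-1}'$ misses $F_1(S)$ altogether, so $S$ imposes no singularity at all on $F^{-1}(H_m)$. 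This is not a peripheral degeneracy: forced singular divisors whose image under $F$ is low-dimensional (e.g.\ unions of fibers) are precisely the phenomenon the two alternatives of the corollary are meant to separate, and they do occur --- take $A_0(F)$ a divisor fibered over a curve and $m=2$; then $\Sing(X_1)$ contains a divisor lying over the finite set $Y_0\cap H_1$, which a general further cut avoids entirely. So the asserted implication ``$F^{-1}(H_m)$ is $R_1$ $\Rightarrow$ $X_1$ is $R_1$'' fails in general, and with it both your application of Corollary \ref{criteria.R_1} at $j=0$ and the entry point into your inductive hypothesis (which requires the ambient $X_1$ to satisfy $R_1$) collapse.

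The paper's argument for the converse is structured differently and avoids having to lift $R_1$ back up through intermediate sections. Fixing an index $j$ at which both alternatives fail, it cuts by a general linear subspace $H_j$ of codimension exactly $j+1$, so that $Y_j\cap H_j\neq\emptyset$ precisely because $\dim Y_j>j$; every point of $A_j(F)\cap F^{-1}(H_j)$ is then forced to be singular (the rank-$j$ image $dF(T_xX)$ cannot surject onto the $(j+1)$-dimensional normal space of $H_j$), this locus is a nonempty divisor sitting inside $A_0(F_j)$ and not contained in a fiber, and the conclusion is then drawn from Corollary \ref{criteria.R_1} together with Theorem \ref{bertini.linear.subspace}. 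In other words, the paper works per offending stratum and chooses the number of cuts to match that stratum, rather than peeling off one hyperplane at a time. If you want to retain your inductive scheme you would have to replace the descent step by an argument tracking, stratum by stratum, whether the image survives the remaining cuts --- at which point you have essentially reconstructed the paper's analysis.
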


\begin{proof} 
If $\dim(Y_j)\,>\,
j$, then a general linear subspace $H_j$ of codimension $j+1$ intersects $Y_j$ nontrivially, and every
point of $f^{-1}(H_j)$ is singular at all the points of $A_j(f)\bigcap f^{-1}(H_j)$.
Let $f_{H_j}\,=\,f\vert_{f^{-1}(H_j)}$. Then $A_0(f_{H_j})$ contains $A_j(f)\bigcap f^{-1}(H_j)$. If $A_j(f)$ is a divisor on $X$,
then $A_j(f)\bigcap
f^{-1}(H_j)$ is a divisor on $f^{-1}(H_j)$, if it is not empty. Also, since $Y_j\bigcap H_j$ is nonempty, it
follows that $A_0(f_{H_j})$
is not contained in any fiber of $f_{H_j}$. Now the result follows from Corollary \ref{criteria.R_1}
and Theorem \ref{bertini.linear.subspace}.
\end{proof}

Next we begin with the local analysis of divisorial components of $A_0(f)$.
%If the map $f\,:\,X\,\longrightarrow\, \PP^N$ has rank zero, then each
%component $f_i$ of $f\,=\,(f_1,\,\cdots ,\,f_N)$ has rank zero.
More precisely, we will study the divisorial components of
rank zero locus of a function in a regular local ring.

\begin{lemma}
Assume that the characteristic of $k$, which will be denoted by $p$, is positive.
Let $A$ be a finitely generated integral domain over $k$ of dimension $n$, and let $w\,\in\, A$ be a separable
function (i.e., the differential is not identically $0$) such that the differential of $w$ vanish along a reduced
irreducible divisor $h\,=\,0$ in ${\rm Spec}\,A$, for some $h\,\in\, A$. Then $w$, at any regular point $x$ of
${\rm Spec}\,A$ and
of $\{h\,=\,0\}$, is of the form $w\,=\, h^2g_1 + g_2^p$ for some $g_1,\,g_2\,\in\, {\mathcal M}_x$, where
${\mathcal M}_x$ is the maximal ideal of the completion ${\mathcal A}_x$ of $A$ along the maximal ideal of $x$. 
\end{lemma}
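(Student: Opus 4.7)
The plan is to work in the completion $\mathcal{A}_x$, which is a regular local ring of dimension $n$. Since $x$ is a regular point of both $\operatorname{Spec}A$ and of the divisor $\{h=0\}$, and since this divisor is reduced and irreducible (so the ideal $(h)$ is prime at $x$), the element $h$ is part of a regular system of parameters $h, t_2, \ldots, t_n$ at $x$. Combining Cohen's structure theorem with the assumption that $\mathbf{k}$ is algebraically closed, I identify $\mathcal{A}_x \cong \mathbf{k}[[h, t_2, \ldots, t_n]]$, and the hypothesis that $df$ vanishes along $\{h=0\}$ becomes the system of conditions
\[
\frac{\partial f}{\partial h},\; \frac{\partial f}{\partial t_2},\; \ldots,\; \frac{\partial f}{\partial t_n} \;\in\; (h)\mathcal{A}_x.
\]

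The first key step is to restrict $f$ to the divisor. Setting $\bar f := f(0, t_2, \ldots, t_n) \in \mathbf{k}[[t_2, \ldots, t_n]]$, the vanishing of $\partial f/\partial t_i$ modulo $h$ gives $\partial \bar f/\partial t_i = 0$ for each $i$. A power series with all partial derivatives zero must lie in $\mathbf{k}[[t_2^p, \ldots, t_n^p]]$; and since $\mathbf{k}$ is perfect (being algebraically closed), every element of this subring is a $p$-th power. Hence $\bar f = g_2^p$ for a unique $g_2 \in \mathbf{k}[[t_2, \ldots, t_n]]$, which I view inside $\mathcal{A}_x$.

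The second step exploits $\partial f/\partial h \in (h)$. Since $f - g_2^p$ vanishes on $\{h = 0\}$ by construction, write $f - g_2^p = h q$ for some $q \in \mathcal{A}_x$. Because $g_2$ is independent of $h$ and $p g_2^{p-1} = 0$ in characteristic $p$,
\[
\frac{\partial f}{\partial h} \;=\; q + h\frac{\partial q}{\partial h},
\]
so the remaining hypothesis $\partial f/\partial h \in (h)$ forces $q \in (h)$. Writing $q = h g_1$ produces the desired decomposition $f = h^2 g_1 + g_2^p$. The containments $g_2, g_1 \in \mathcal{M}_x$ are then read off by tracking constant terms: $g_2 \in \mathcal{M}_x$ because $g_2(x)^p = f(x) = 0$ (reducing to this by absorbing a $p$-th power constant into $g_2$ via perfectness of $\mathbf{k}$, should $f(x)$ be nonzero), and the containment for $g_1$ follows by using $f \in \mathcal{M}_x^2$, which holds because $d_x f = 0$ at the point $x \in \{h=0\}$.

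The step I expect to be the main obstacle is the $p$-th root extraction: without perfectness of $\mathbf{k}$ a power series in $\mathbf{k}[[t_2^p, \ldots, t_n^p]]$ need not be a $p$-th power of an element of $\mathbf{k}[[t_2, \ldots, t_n]]$, so algebraic closedness of $\mathbf{k}$ is essential here. Everything else reduces to bookkeeping inside the completed regular local ring $\mathbf{k}[[h, t_2, \ldots, t_n]]$, where the separability hypothesis on $f$ guarantees that the manipulations above are not vacuous.
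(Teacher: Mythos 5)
Your argument is essentially the paper's: both pass to the completed regular local ring with $h$ as the first member of a regular system of parameters, observe that all partials of $f$ lie in $(h)$, deduce that the ``$h$-constant'' part of $f$ is a $p$-th power (all of its partials vanish and $\mathbf k$ is perfect), and then kill the linear-in-$h$ term using $\partial f/\partial h\in (h)$. Your bookkeeping --- restricting to the divisor to get $\bar f=g_2^p$ and then applying the product rule to $f-g_2^p=hq$ --- is a little cleaner than the paper's device of writing $f=g+(g')^p$ with no monomial of $g$ a $p$-th power and expanding $g=a+x_1b+x_1^2c$, but the content is the same.

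The one step that does not go through is the last one: from $f\in\mathcal M_x^2$ you cannot conclude $g_1\in\mathcal M_x$, because $h^2g_1\in\mathcal M_x^2$ no matter what $g_1$ is; you would need $f-g_2^p\in\mathcal M_x^3$. In fact $g_1\in\mathcal M_x$ can genuinely fail: for $p>2$ the function $f=h^2$ satisfies every hypothesis of the lemma, and a relation $h^2=h^2g_1+g_2^p$ with $g_1\in\mathcal M_x$ would give $h^2(1-g_1)=g_2^p$ with $1-g_1$ a unit, which is impossible by comparing $h$-adic valuations ($p\mid 2$ would be forced). This is a defect of the statement rather than of your strategy --- the paper's own proof takes $g_1=c$, the coefficient of $x_1^2$, with no control on whether $c$ is a unit --- and the honest conclusion of both arguments is $f=h^2g_1+g_2^p$ with $g_1\in\mathcal A_x$ and $g_2\in\mathcal M_x$ (the latter assuming $f(x)=0$, which, as you note, is also needed and is implicit in the intended application). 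Aside from this shared caveat, your proof is correct.
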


\begin{proof}
The locus where $w$ has rank zero is independent of the
choice of coordinates. Assume that this locus has a divisorial 
component with generic point contained in the regular locus of ${\rm Spec}~(A)$. 
Consider this divisor $D$ with reduced structure together
with a general point $x$ where this divisor is smooth. We choose coordinates $(x_1,\,x_2,\,\cdots ,\,x_n)$ near the point $x$
 such that $h\,=\,x_1\,=\,0$ is the divisor $D$ along which $w$ has rank $0$. Now consider the
decomposition of $$w\,=\,g+(g')^p\,\in\, {\mathcal A}_x=k[[x_1,\ldots,x_n]]$$ with $g,\,g'\,\in \,{\mathcal M}_x$ and none of the monomials
in $g$ is a $p$--th power. Let 
$$\Sigma_w\,:= \,\langle\partial_{x_i}(w)\rangle\,=\,\langle\partial_{x_i}(g)\rangle \,=:\, \Sigma_{g}$$
be the ideal of critical space generated by the partial 
derivatives of $w$. Since we assumed that $w$ is separable, this ideal is not the zero ideal.

Since $w$ is singular
along $x_1\,=\,0$, it follows that $\sqrt{\Sigma_w}\,\subset\, (x_1)$, where $\sqrt{~~}$
denotes the radical of the ideal. Let $g\,=\,a+x_1b+x_1^2c$, where both
$a$ and $b$ are elements of ${\mathcal M}_x$ that are not a $p$--th power and are expressible as power
series in the variables $x_2,\,\cdots,\, x_n$ only. Since $x_1$ divides $\partial_{x_i}(g)$ for all $i$, it follows that 
$x_1$ divides $\partial_{x_i}(a)$, which implies that $\partial_{x_i}(a)\,=\,0$ for all $i$ because $a$ is a power
series in the variables $x_2, \,\cdots,\, x_n$ only. But a function whose all partial
derivatives are identically zero is either 
a constant or a $p$--th power. By our assumption that
the element $a$ is in the maximal ideal and not a $p$--th power, we
conclude that $a\,=\,0$. Hence $g\,=\, x_1b + x_1^2c$.

Continuing, $x_1$ divides
$\partial_{x_1}(g) \,=\, b+x_1\partial_{x_1}(b)+2x_1c+x_1^2\partial_{x_1}(c)$
if and only if $x_1$ divides $b$, which implies that $b\,=\,0$ as $b$ is a power series in other variables. 
Hence $w\,=\,x_1^2g_1 + g_2^p$ by taking $g_1\,=\,c$ and $g_2\,=\,g'$, proving the lemma.
\end{proof}

\begin{example}\label{ex210}
Let $\mathbf k$ be an algebraically closed field of characteristic $p\,>\,0$, and let $K$ be the field extension of
$k(x,y)\,=\,k(\PP^2)$ given by attaching roots of $Z^p-xZ -y$. Let $f\,:\, X\,\longrightarrow\,
\PP^2$ be the normalization map. Then $f$ is not residually separable, yet $f^{-1}(H)$ is a connected nonsingular
curve for $H$ different from the three line $x\,=\,0$, $y\,=\,0$ and the line at infinity. Note that in this
case $A_1(f)\,=\,f^{-1}(\{x=0\})$ and
$A_2(f)\,=\,X\setminus A_1(f)$. Hence Theorem \ref{main-bertini} applies (with $k=1$) though Corollary \ref{residually-separable} doesn't.
\end{example}

\begin{example}\label{counter-example-normal}
Let $\mathbf k$ be as in Example \ref{ex210} and $p$ an odd prime.
Let $f\,:\,\PP^2\,\longrightarrow\, \PP^2$ be a morphism given by
$$f(x,y,z)\,=\,(x^p-yz^{p-1},\, x^p+yz^{p-1},\, xz^{p-1}+ y^p+z^p)\, .$$
The Jacobian matrix for $f$ is
\[
\begin{bmatrix}
    0       & -z^{p-1} & yz^{p-2} \\
    0       &  z^{p-1} & -yz^{p-2}\\
    z^{p-1} & 0        & -xz^{p-2} \\
\end{bmatrix}
\]
Note that $A_2(f)\,=\,(\{z\ne 0\})$
and $A_0(f)\,=\,(\{z=0\})$, hence the hypothesis of Theorem \ref{main-bertini} fails. Note that
the pullback of any line in $\PP^2$ by $f$ is singular at $z\,=\,0$.
\end{example}

A variety $X$ is said to satisfy \emph{$S_k$} if for every point $x\,\in\, X$ of codimension $r$, the stalk
${\mathcal O}_{X,x}$ has 
a regular sequence of length at least $\min(k,\,r)$, i.e., X is Cohen-Macaulay at points of codimension at most $k$.

\begin{lemma} \label{hyperplane-S_k}
Let $X$ be a variety satisfying $S_k$. Then the subset $Z\, \subset\, X$ where $S_{k+1}$ fails is of codimension
at least $k+1$. 
\end{lemma}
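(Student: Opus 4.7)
The plan is to argue pointwise directly from the definition: I would show that no point of $Z$ can have codimension $\le k$, which forces $\codim_X(Z) \ge k+1$. Fix an arbitrary $z \in Z$, set $r := \codim_X\overline{\{z\}}$, and examine the two depth conditions on the local ring $\mathcal{O}_{X,z}$.

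On one hand, the hypothesis that $X$ satisfies $S_k$ gives a regular sequence in $\mathcal{O}_{X,z}$ of length at least $\min(k,\,r)$. On the other hand, $z \in Z$ means exactly that no regular sequence of length $\min(k+1,\,r)$ exists in $\mathcal{O}_{X,z}$. The key observation is that as soon as $r \le k$, both thresholds collapse to the same value, $\min(k,\,r) = \min(k+1,\,r) = r$, which produces an immediate contradiction: the stalk would simultaneously admit and fail to admit a regular sequence of length $r$. The only surviving possibility is therefore $r \ge k+1$.

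Since this ruling-out holds uniformly for every point of $Z$, the conclusion $\codim_X(Z) \ge k+1$ follows at once. I do not anticipate any genuine obstacle; the whole argument is a one-line definition-chase, the essential content being that the Serre conditions $S_k$ and $S_{k+1}$ coincide at all points of codimension $\le k$. In particular, no closedness or constructibility property of the non-$S_{k+1}$ locus is needed here, since the codimension bound is verified at each point separately.
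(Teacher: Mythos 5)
Your argument is correct, and it is genuinely different from---and considerably more elementary than---the paper's. With the paper's pointwise definition of $S_k$ (depth of $\mathcal{O}_{X,x}$ at least $\min(k,r)$ at every point of codimension $r$), the defining inequalities for $S_k$ and $S_{k+1}$ literally coincide at every point of codimension $r\le k$, so no such point can lie in $Z$, and with the usual convention $\codim(Z)=\inf_{z\in Z}\dim \mathcal{O}_{X,z}$ for an arbitrary subset the bound follows at once. The paper instead argues by contradiction on a codimension-$k$ component $V(P)$ of $Z$: it uses $S_k$ to produce a regular sequence $a_1,\dots,a_k$ in $P$, then uses finiteness of the set of associated primes of $(a_1,\dots,a_k)$ and prime avoidance to extend it to a regular sequence of length $k+1$ inside a well-chosen codimension-$(k+1)$ prime $Q\supset P$, contradicting failure of $S_{k+1}$ at $Q$. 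That route requires $X$ to be of finite type over a field (to have enough codimension-$(k+1)$ primes to choose from) and implicitly treats $Z$ as closed; your observation rules out failure at every point of codimension $\le k$ outright and works over any Noetherian base. One caution about your closing remark: dispensing with closedness is fine for the lemma as stated, but the subsequent Corollary~\ref{S_kforhyplanesection} does treat $Z$ as a closed set and intersects it with hyperplanes, and your pointwise bound controls $\codim\overline{Z}$ only once one knows that the generic points of the components of $\overline{Z}$ actually lie in $Z$ (e.g.\ via openness of the $S_{k+1}$-locus for excellent schemes), so that input still has to be supplied where the corollary is used.
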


\begin{proof}
Suppose $Z$ has codimension $k$. By taking an irreducible component of it we may assume that $Z$ is irreducible. In view
of the local nature of the statement it suffices to prove it under the assumption that
$X$ is affine. Let $P$ be the prime ideal in the coordinate ring of $X$
defining $Z$. Then $S_k$ implies that $P$ contains a regular sequence of length $k$, say $a_1,\, a_2,\, \cdots,
\, a_k$. Let $I$ be the ideal $(a_1,\,a_2,\,\cdots,\, a_k)$. Since $X$ is of finite type over a field, there are
infinitely many codimension $k+1$ primes containing $P$, and there are finitely many associated primes of $I$. So
there exists a codimension $k+1$ prime $Q$ containing $P$ such that $Q$ is not contained in the union of the
primes associated to $P$. Take $a_{k+1}$ in $Q$ such that $a_{k+1}$ is not contained in any of the
primes associated to $I$. Then $a_1,\,a_2,\,\cdots,\,a_{k+1}$ is a regular sequence in $Q$ contradicting the
assumption that $S_{k+1}$ fails for $Q$. This completes the proof.
\end{proof}

\begin{cor}\label{S_kforhyplanesection}
Let $X$ be a $n$-dimensional projective variety satisfying $S_k$. Let $Z$ be the closed subset of $X$ where
$S_{k+1}$ fails, and let $H$ be a hypersurface not containing any component of $Z$ of dimension $n-k-1$
(maximum possible dimension). Then $H$ satisfies $S_k$.
\end{cor}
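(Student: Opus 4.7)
My plan is to verify the $S_k$ property locally at every codimension-at-most-$k$ point of $X \cap H$, since $S_k$ amounts to Cohen-Macaulayness at such points. First I would observe that we may assume $H$ does not contain $X$, for otherwise $X \cap H = X$ and the conclusion is the hypothesis. Since $X$ is an integral projective variety, a local equation $h$ of $H$ is then a nonzero divisor in every local ring $\cO_{X,y}$ with $y \in X \cap H$, which gives $\cO_{X \cap H, y} = \cO_{X,y}/(h)$ and the codimension relation $\codim_X(y) = \codim_{X \cap H}(y) + 1$.

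The key step is to rule out the possibility that a codimension-at-most-$k$ point $y$ of $X \cap H$ lies in $Z$. If $y \in Z$, then Lemma~\ref{hyperplane-S_k} applied to $X$ yields $\codim_X(y) \geq k+1$, while the codimension relation above gives $\codim_X(y) \leq k+1$. Equality forces $\dim \overline{\{y\}} = n-k-1$, and since $\overline{\{y\}}$ is irreducible and contained in the closed set $Z$, it must coincide with an irreducible component of $Z$ of maximal possible dimension. The hypothesis on $H$ forbids any such component from being contained in $H$, contradicting $\overline{\{y\}} \subseteq H$ (which follows from $y \in H$, as $H$ is closed). Hence $y \notin Z$.

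With $y \notin Z$ established, $X$ satisfies $S_{k+1}$ at $y$, and since $\codim_X(y) \leq k+1$ this means $\cO_{X,y}$ is Cohen-Macaulay. Because $h$ is a nonzero divisor in this Cohen-Macaulay local ring, the quotient $\cO_{X \cap H, y}$ is again Cohen-Macaulay, of dimension $\codim_{X \cap H}(y) \leq k$. Hence $X \cap H$ is Cohen-Macaulay at every codimension-at-most-$k$ point, proving $S_k$. The main obstacle I anticipate is the codimensional bookkeeping in the middle paragraph: the balance between the lower bound $\codim_X(y) \geq k+1$ coming from Lemma~\ref{hyperplane-S_k} and the upper bound $\codim_X(y) \leq k+1$ coming from the hypersurface section is exactly what allows the hypothesis on components of dimension $n-k-1$ to be invoked, so each codimension inequality must be made sharp and applied at the right point.
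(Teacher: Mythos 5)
Your proof is correct and follows essentially the same route as the paper's: both invoke Lemma~\ref{hyperplane-S_k} to bound $\codim_X Z \geq k+1$, use the hypothesis on $H$ to rule out codimension-at-most-$k$ points of $X\cap H$ lying in $Z$, and then descend Cohen--Macaulayness through the hypersurface section. Your write-up is somewhat more explicit than the paper's (in particular the nonzerodivisor argument and the uniform treatment of all codimension $\leq k$ points, which the paper handles by a separate $S_{k-1}$ remark), but there is no substantive difference.
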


\begin{proof} 
Let $X$ satisfy $S_k$. Then by definition, for any $H$, the intersection $X\cap H$ satisfies $S_{k-1}$. Let $H$ be a
general hyperplane such that $H\cap Z$ is a subscheme of codimension at least $k+1$ in $H$. Now every point $y$
of codimension $k$ in $X\cap H$ is also a point of codimension $k+1$ in $X$. Hence $y$ is a $n-k-1$ dimensional point.  Note that $y$ cannot be a component
of $Z$ as $H$ intersects $n-k-1$ dimensional component of $Z$ properly while $y$ is a point in $X\cap H$. From Lemma~\ref{hyperplane-S_k} we know
that $X$ satisfies $S_{k+1}$ along $y$. Hence $X\cap H$ satisfies $S_k$ along $y$. 
\end{proof}

\begin{lemma}\label{bertini-normal-finite-mor}
Let $Z_0$ be a normal projective variety over $\mathbf k$. Let 
$$f\,:\,Z\,\longrightarrow\, Z_0$$ be a finite separable morphisms of normal varieties such that $A_j(f)$ has 
codimension at least two in $Z$ for all $0\,\leq\, j\,\leq \,\dim(Z_0)-2$. For a hyperplane section $W_0$ in $Z_0$, let 
$$W\,:=\,f^{-1}(W_0)$$ be the inverse image in $Z$. For a general $W_0$, the hypersurface $W$ is an irreducible normal variety, and $A_j(f\vert_{W})$ has 
codimension at least two in $W$ for all $0\,\leq\, j\,\leq\, \dim(Z_0)-3$.

Moreover, the above holds
when $W_0$ is replaced by a general linear subspace of codimension at most $\dim(Z_0)-1$. 
\end{lemma}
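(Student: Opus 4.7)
The plan is to establish the hyperplane case ($m=1$) in detail and then induct on the codimension $m$ of the linear subspace. For the hyperplane case I must check three things for a general $W_0$: (i) $W_i$ is normal, (ii) $W_i$ is irreducible, and (iii) the rank loci $A_j(f_i|_{W_i})$ have codimension at least two in $W_i$ for $0\le j\le\dim(Z_0)-3$.

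For (i), I use Serre's criterion and verify $R_1+S_2$ separately. The $R_1$ property of $W_i$ is a direct application of Theorem \ref{bertini.linear.subspace} with $k=1$, $m=1$ to $f_i:Z_i\to Z_0\subset\PP^N$: $Z_i$ is normal, hence $R_1$, and the only hypothesis to verify is that $\codim_{Z_i}A_0(f_i)\ge 2$, which is exactly what is assumed. The $S_2$ property of $W_i$ follows from Corollary \ref{S_kforhyplanesection} applied with $k=2$: normality of $Z_i$ gives $S_2$, by Lemma \ref{hyperplane-S_k} the locus $Z\subset Z_i$ where $S_3$ fails has codimension at least three, and for a general hyperplane $W_0$ the pullback $W_i$ avoids the finitely many components of $Z$ of maximal dimension $\dim Z_i-3$ (since not containing a fixed subvariety of positive codimension in $Z_0$ is a Zariski-open condition on the dual projective space, and one pulls back via the finite $f_i$). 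Combining, $W_i$ is normal.

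For (ii), since $W_i$ is normal it decomposes as a disjoint union of its irreducible components, so irreducibility is equivalent to connectedness. I would invoke a Bertini-Grothendieck connectedness theorem: $f_i^*\cO_{Z_0}(1)$ is ample on the normal projective variety $Z_i$ of dimension at least two, and $W_i$ is the zero scheme of a global section of this ample line bundle, hence connected. For (iii), Lemma \ref{lem-b} identifies $A_j(f_i|_{W_i})=A_{j+1}(f_i)\cap W_i$; the hypothesis gives $\codim_{Z_i}A_{j+1}(f_i)\ge 2$ whenever $j+1\le\dim(Z_0)-2$, i.e.\ $j\le\dim(Z_0)-3$, and a general $W_0$ ensures the expected codimension is maintained after intersection. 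All these conditions on $W_0$ are Zariski-open, and since there are finitely many indices $i$, one can intersect the corresponding good loci to obtain a single generic choice of $W_0$ working for all $i$.

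For the moreover, I induct on the codimension $m$ of the linear subspace, with base case $m=1$ treated above. Given the conclusion for codimension $m$, write a general codimension $m+1$ subspace as $V'\cap H$ with $V'$ a general codimension $m$ subspace and $H$ a general hyperplane. By the inductive hypothesis the triple $(V'\cap Z_0,\,W_i^{(m)},\,f_i|_{W_i^{(m)}})$ satisfies the hypotheses of the lemma in its own right: $V'\cap Z_0$ is normal projective (classical Bertini applied to $Z_0$), $f_i|_{W_i^{(m)}}$ is finite separable between normal varieties, and the rank-locus codimension hypothesis is precisely what the codimension-$m$ conclusion asserts. Applying the $m=1$ case to this triple yields the codimension-$(m+1)$ conclusion.

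The main obstacle is step (ii), establishing connectedness of $W_i$ in arbitrary characteristic: the paper allows positive characteristic, so the Kodaira vanishing route to $H^1(Z_i,-W_i)=0$ is unavailable. One should instead appeal to a characteristic-free connectedness result for ample Cartier divisors on normal projective varieties of dimension at least two, or alternatively deduce connectedness from a Lefschetz-type surjection on étale fundamental groups of hyperplane sections of normal varieties. The remaining ingredients — $R_1$, $S_2$, the rank-locus bound, and the induction — are straightforward bookkeeping built on the Bertini machinery already in place.
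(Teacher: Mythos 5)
Your proposal is correct and follows essentially the same route as the paper, which likewise deduces the hyperplane case by combining the $R_1$ statement (Corollary \ref{criteria.R_1}/Theorem \ref{bertini.linear.subspace}) with the $S_2$ statement (Lemma \ref{hyperplane-S_k} and Corollary \ref{S_kforhyplanesection}) via Serre's criterion, and then handles general linear subspaces by the same induction on codimension, using that the conclusion at each stage reproduces the hypotheses at the next. The only point you elaborate beyond the paper is the connectedness of $W_i$, for which your appeal to a characteristic-free connectedness theorem for ample divisors is a legitimate filling-in of a step the paper passes over silently.
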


\begin{proof}
This follows from Corollary \ref{criteria.R_1}, Theorem \ref{bertini.linear.subspace} and Lemma \ref{hyperplane-S_k}.
Note that since $A_r(f_i)$ has codimension at least two for all $r\,<\,\dim(Z_0)-1$, using induction, the two
statements in the lemma holds with $W_0$ replaced by a general linear subspace of codimension at most $\dim(Z_0)-1$.
\end{proof}

\begin{lemma}\label{bertini-divisor}
Continue with the notation and the hypothesis of the Lemma \ref{bertini-normal-finite-mor}. Let $D$ be a 
divisor in $Z$ such that $f$ restricted to each irreducible component of $D$ is separable. Then for a 
general $W_0$, the inverse image $W\,=\, f^{-1}(W_0)$ intersects $D$ transversally on a nonempty Zariski 
open dense subset of $W\cap D$.

Moreover, if $W_0$ is replaced by a general linear subspace of codimension $\dim(Z_0)-1$ (this implies
that $W_0$ is a curve), then $W$ is a smooth curve in $Z$ which intersects $D$ transversally. 
\end{lemma}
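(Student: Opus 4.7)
The strategy is to reduce transversality of $W_i$ with each irreducible component $D_i^{(k)}$ of $D_i$ to a classical incidence-variety Bertini statement downstairs in $Z_0$, and then lift this via the finite separable map $f_i|_{D_i^{(k)}}$. Lemma \ref{bertini-normal-finite-mor} already supplies that $W_i$ is an irreducible normal variety for general $W_0$, so only transversality has to be established. Since $f_i$ is finite, the image $E_i^{(k)} \,:=\, f_i(D_i^{(k)})$ has dimension $n-1$ and is thus a divisor in $Z_0$.

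A standard dimension count on the incidence variety $\{(y,H) : y \in E_i^{(k),\mathrm{sm}},\ T_y E_i^{(k)} \subset T_y H\}$ in $E_i^{(k)} \times \check\PP^N$ gives dimension $(n-1)+(N-n) \,=\, N-1$, so the hyperplanes $H$ tangent to $E_i^{(k)}$ somewhere form a proper closed subset of $\check\PP^N$. For $H$ avoiding this locus, set $W_0 \,=\, Z_0 \cap H$. Fix a general point $x$ of a component of $W_i \cap D_i^{(k)}$ and write $y \,=\, f_i(x)$. By normality and the codimension-two hypothesis on the strata $A_j(f_i)$ with $j \le n-2$, the point $x$ is a smooth point of $Z_i$ and of $D_i^{(k)}$, and $x \in A_{n-1}(f_i) \cup A_n(f_i)$. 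Separability of $f_i|_{D_i^{(k)}}$ gives $df_i(T_x D_i^{(k)}) \,=\, T_y E_i^{(k)}$. A short case analysis in each rank situation then shows that the smoothness of $W_i$ at $x$ (via Lemma \ref{lem-a}) and the inclusion $T_x D_i^{(k)} \subset T_x W_i$ both boil down to the tangency $T_y E_i^{(k)} \subset T_y H$, contradicting the choice of $H$; hence $W_i$ is smooth at $x$ and $T_x W_i + T_x D_i^{(k)} \,=\, T_x Z_i$, yielding transversality on a dense open subset of $W_i \cap D_i^{(k)}$.

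For the \emph{moreover} part, I would induct on the codimension $c$ of the linear subspace, cutting by one more general hyperplane at each step. Lemma \ref{bertini-normal-finite-mor} propagates both normality and the codimension-two condition on rank loci to the intermediate pullback $W_i^{\mathrm{int}}$, while the first part of the lemma just proved (applied to the transformed divisor) ensures that $D_i \cap W_i^{\mathrm{int}}$ is a divisor on which the restriction of $f_i|_{W_i^{\mathrm{int}}}$ stays generically separable on each irreducible component. After $n-1$ cuts, Theorem \ref{bertini.linear.subspace} with $k\,=\,1$ and $m\,=\,n-1$ gives that $W_i$ satisfies $R_1$, so $W_i$ is a smooth curve; and $W_i \cap D_i$ is then $0$-dimensional, so the dense-open transversality from the first part automatically holds at every point of this finite scheme.

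The main obstacle is the rank-drop case $x \in A_{n-1}(f_i)$ inside the case analysis of paragraph two: here $df_i$ is not an isomorphism, so the translation between upstairs tangent-space inclusions and downstairs tangent-space inclusions through $df_i$ is not literal. One has to use the separability of $f_i|_{D_i^{(k)}}$ together with the dimension identity $\dim df_i(T_x Z_i) \,=\, n-1 \,=\, \dim T_y E_i^{(k)}$ to conclude $df_i(T_x Z_i) \,=\, T_y E_i^{(k)}$, which is exactly what makes the classical downstairs tangency condition capture the upstairs transversality defect.
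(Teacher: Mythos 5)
Your proof is correct and follows essentially the same route as the paper: the paper's one-line justification (separability of $f_i|_{D_i}$ forces generic transversality for a general hyperplane, by the mechanism of Corollary \ref{residually-separable}) is exactly the incidence-variety/dual-variety count you carry out explicitly, and the \emph{moreover} part is handled in both cases by induction via Lemma \ref{bertini-normal-finite-mor} plus the observation that a dense open subset of the finite set $W_i\cap D_i$ is all of it. Your extra worry about the stratum $A_{n-1}(f_i)$ is harmless but unnecessary: once $df_i(T_xD_i^{(k)})=T_yE_i^{(k)}$ has dimension $n-1$, non-tangency of $H$ to $E_i^{(k)}$ already settles both the smoothness of $W_i$ at $x$ and the transversality, irrespective of the rank of $df_i$ on all of $T_xZ_i$.
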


\begin{proof}
Note that since $f$ restricted to any irreducible component of $D$ is separable, for a general hyperplane 
section $W_0$ the intersection
$D\cap W$ will be a divisor in $W$, and $f$ restricted to $D\cap W$ will be separable.
Hence $W$ intersects $D$ transversally away from a proper closed subset of every irreducible component of 
$W\cap D$.

Therefore, in view of Lemma \ref{bertini-normal-finite-mor}, we can apply induction to conclude that if $W_0$ is a 
section obtained by a general linear subspace of codimension $\dim(Z_0)-1$, then $W$ is a smooth curve in $Z$,
and since $W\cap D$ is a finite set of points, $D$ intersects $W$ transversally.
\end{proof}

\section{Lefschetz theorem for normal varieties}\label{sec:lefschetz-normal}

Given a projective variety $Z$ defined over $\mathbf k$, let ${\rm Vect}^{f}(Z)$ denote the category of essentially
finite vector bundles on $Z$ \cite{No}; it is an abelian
tensor category. Given a base point $x_0\, \in\, Z$, we have a fiber functor on
${\rm Vect}^{f}(Z)$ that sends any $E\, \in\,{\rm Vect}^{f}(Z)$ to its fiber $E_{x_0}$. The Nori fundamental
group $\pi^N(Z,\, x_0)$ is defined to be the corresponding Tannaka dual \cite{No}. There is a natural surjection
from $\pi^N(Z,\, x_0)$ to the \'etale fundamental group $\pi_1^{et}(Z,\, x_0)$.

We note that the injectivity part of Lefschetz theorem for \'etale fundamental group
fails in general for normal varieties. To give such an
example, let $X\,\subset\,\PP^n$ be a projectively normally embedded smooth variety with 
nontrivial $\pi^{et}(X)$. Let $\widehat{X}\,\subset\,\PP^{n+1}$ be its cone; note that $\widehat X$ is normal.
Then $\widehat{X}\cap H$ is isomorphic to $X$ for a 
general hyperplane $H$. We will show that $\widehat{X}$ is simply connected. Let 
$\widetilde{X}\,\longrightarrow\,\widehat X$ be the blow-up of the vertex. Then the projection
$q\, :\, \widetilde{X}\,\longrightarrow\,X$ is a $\PP^1$--bundle over $X$. Now by the homotopy 
exact sequence for \'etale fundamental groups \cite[p.~117]{Mu}, the map
$q$ induces an isomorphism of the \'etale fundamental groups. 
The exceptional locus $E$ of $\widetilde{X}\,\longrightarrow\, \widehat X$ defines a section of $q$. Hence the
inclusion map $E\,\hookrightarrow\,\widetilde{X}$ 
induces an isomorphism of \'etale fundamental groups. Let $Y$ be a connected \'etale cover of $\widehat X$, 
then $Y$ is normal and hence irreducible.
Let $\widetilde{Y}$ be its base change to $\widetilde{X}$, which is birational to $Y$ and hence is also irreducible. 
This \'etale covering is evidently trivial over $E$, and hence it is trivial over $\widetilde X$ and an isomorphism as 
$\widetilde{Y}$ is irreducible. This implies that $Y\,\longrightarrow\,\widehat{X}$ is also an isomorphism.
Hence $\widehat{X}$ is simply connected.

However, the surjectivity part of Lefschetz theorem for \'etale fundamental group holds for normal varieties.

The above example 
also implies that the Nori fundamental group is not an isomorphism under restriction to general hyperplane 
sections of $X$. On the other hand, as we shall prove in this
section, the surjectivity of the Nori fundamental groups, which is 
known for smooth projective varieties \cite{BH}, continues to hold for normal projective varieties.

The following result is a slight variation of \cite[Chapter III, Theorem 7.6]{har} which uses Serre duality
and Serre vanishing.

\begin{pro}\label{serre-vanishing}
 Let $X$ be an equidimensional projective scheme over an algebraically closed field such that for all closed points $x$ in $X$, depth of $\cO_{X,x}$ is $n$. Let $\scrV$ be a locally free sheaf on $X$. Then $H^i(X,\scrV(-d))\,=\,0$ for $i\,<\,n$ for $d$ sufficiently large.
\end{pro}

\begin{proof}
The proof is exactly same as in \cite[Chapter III, Theorem 7.6(b)]{har} of $(i) \implies (ii)$; note that the depth
of $\scrV_x$ is same as that of $\cO_{X,x}$.
\end{proof}

\begin{lemma}\label{Q}
Let $X$ be a normal projective variety. Let $F\,:\,X\,\longrightarrow\, X$ be the Frobenius morphism,
and let $Q$ be the cokernel of the induced map $\cO_X\,\longrightarrow\, F_*\cO_X$. Let $\scrV$ be a vector
bundle on $X$. Then $H^0(X,\,\scrV \otimes Q(-d))\,=\,0$ for some $d$ sufficiently large. In particular,
associated primes of $Q$ have positive dimension.
\end{lemma}

\begin{proof}
Note that if $X$ is a curve then it is smooth. Hence the result follows from \cite[Chapter III,
Theorem 7.6(b)]{har} and the fact that $Q$ is locally free (see the remark below). So assume that
$\dim X \, \geq\, 2$.

Tensoring the exact sequence
$$0\,\longrightarrow\, \cO_X\,\longrightarrow\, F_*\cO_X\,\longrightarrow\, Q\,\longrightarrow\, 0$$
 by the vector bundle $\scrV(-d)$, we obtain the exact sequence
$$0\,\longrightarrow\, \scrV(-d)\,\longrightarrow\, F_*\cO_X\otimes \scrV(-d)\,\longrightarrow\,
\scrV\otimes Q(-d)\,\longrightarrow\, 0.$$
Consider the associated long exact sequence of cohomologies
 $$\cdots \,\longrightarrow\, H^0(X,\, F_*\cO_X\otimes\scrV(-d))\,\longrightarrow\,
H^0(X,\,\scrV\otimes Q(-d)) \,\longrightarrow\, H^1(X,\,\scrV(-d))\,\longrightarrow\, \cdots\, .$$
We have $H^0(X,\, F_*\cO_X\otimes\scrV(-d))\,=\,0$ for $d$ sufficiently large as $F_*\cO_X\otimes\scrV$ is
torsionfree. To see that $H^1(X,\,\scrV(-d))\,=\,0$, first note that $X$ is normal. Since
$\dim X\, \geq\, 2$, the depth of $\cO_{X,x}$ is at least 2 for $x$ any closed point of $X$. Now the
result follows from Proposition \ref{serre-vanishing}.

For the last statement, suppose $x$ be a closed point in $X$ and the corresponding maximal ideal is an
associated prime of $Q$. Then there exist a nonzero section $s$ of $Q$ such that the support of $s$ is
precisely $x$. Hence we get a morphism $i_*\cO_{X,x}/m_x\,\longrightarrow\, Q$ sending $1$ to $s$
where $i\,:\,x\,\longrightarrow\, X$ is the closed immersion. Tensoring with $\cO(-d)$ for
$d$ sufficiently large and taking global section produces a section of $Q(-d)$ contradicting
the fact that $H^0(X,\, Q(-d))\,=\,0$.
\end{proof}

\begin{remark}
We note that if $X$ is smooth then $Q$ is isomorphic to $F_*B^1_X$ where $B^1_X$ is the subsheaf of
$\Omega^1_X$ consisting of exact differentials. %Lemma \ref{Q} implies that the same is true if
%$X$ is normal and it has only isolated singularities. 
\end{remark}

For a sheaf of $\cO_X$-module $\scrF$ on $X$, let $\tor(\scrF)$ denote the torsion subsheaf of of $\scrF$.

\begin{pro}\label{proQ}
Take $Q$ as in Lemma \ref{Q}. For any associated prime $P$ of the sheaf $Q$, let $i_P
\,:\,X_P\,\longrightarrow\, X$ be the closed immersion, where $X_P$ is the closed subvariety of
$X$ defined by $P$. Let $Q_P\,=\,i_P ^*Q/\tor(i_P ^*Q)$ be torsionfree on $X_P$. Let $\scrV$ be
an essentially finite bundle on $X$ and $Y$ be a hypersurface on $X$. If $\degree(Y)\, >\,
\mu_{max}(Q_P)$ for every associated prime $P$ of $Q$ then $H^0(X, Q\otimes \scrV(-Y))\,=\,0$.
\end{pro}

\begin{proof}
 By Lemma \ref{Q} for every associated prime $P$ of $Q$, the dimension of $X_P$ is at least one. Let $s
\,\in\, H^0(X,\, Q\otimes \scrV(-Y))$ be a nonzero section. Then there exist $P$ associated prime of $Q$
such that $X_P \,\subset\, \Supp(s)$. The image $s_P$ of $s$ under the map $Q\otimes \scrV(-Y)\,\longrightarrow\,
i_{P*}(Q_P\otimes i^*\scrV(-Y))$ is also nonzero. But this implies that
$H^0(X_P, \,Q_P\otimes i_P^*\scrV(-Y))\,=\,\Hom_{X_P} (i_P^*\scrV^{\vee}(Y),\, Q_P)$ is nonzero.

As $\scrV$ is essentially finite, the pullback $i_P^*\scrV^{\vee}$ is essentially finite on $X_P$, and hence
$i_P^*\scrV^{\vee}(Y)$ is semistable of $\degree(Y)$. Now the inequality $$\degree(Y)\, > \, \mu_{max}(Q_P)$$
implies that $\Hom_{X_P} (i_P^*\scrV^{\vee}(Y),\, Q_P)\,=\,0$. This contradicts the earlier observation that
$\Hom_{X_P} (i_P^*\scrV^{\vee}(Y),\, Q_P)\,\not=\,0$.
\end{proof}

\begin{thm}\label{thmnl}
Let $X$ be a normal projective variety, of dimension at least two, defined
over an algebraically closed field $\mathbf k$. Let $Q$ and $Q_P$ be as in Proposition \ref{proQ} and
$$d\,\,=\,\,{\rm max}\{\mu_{max}(Q_P)\,\big\vert\,\, P \text{ associated prime of } Q\}.$$ Then for a general
ample hypersurface $Y\,\subset\, X$ of degree greater than $d$, the induced homomorphism
$\pi^N(Y)\,\longrightarrow\, \pi^N(X)$ is surjective (faithfully flat).
\end{thm}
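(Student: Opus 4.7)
By Tannakian duality, the induced homomorphism $\pi^N(Y) \to \pi^N(X)$ is faithfully flat if and only if the restriction functor $\mathcal{F} \colon {\rm Vect}^f(X) \to {\rm Vect}^f(Y)$ is fully faithful and every sub-object of $\mathcal{F}(V)$ in ${\rm Vect}^f(Y)$ is isomorphic to $\mathcal{F}(V')$ for some sub-object $V' \subset V$ in ${\rm Vect}^f(X)$. My plan is to verify these two properties separately, using a cohomological Lefschetz argument for the first, and the Bertini machinery of Section 2 applied to a trivializing cover for the second.

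For full faithfulness, the identity $\text{Hom}(E_1, E_2) = H^0(X, E_1^\vee \otimes E_2)$ combined with the stability of ${\rm Vect}^f(X)$ under tensor products and duals reduces the problem to showing that the restriction map $H^0(X, V) \to H^0(Y, V|_Y)$ is an isomorphism for every essentially finite bundle $V$ on $X$. Injectivity is the vanishing $H^0(X, V \otimes \cO_X(-Y)) = 0$; it follows because essentially finite bundles are strongly semistable with numerically trivial first Chern class, so $V \otimes \cO_X(-Y)$ has strictly negative slope and admits no nonzero global sections. Surjectivity is the vanishing $H^1(X, V \otimes \cO_X(-Y)) = 0$, which is Serre vanishing once the degree of $Y$ is sufficiently large.

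For the sub-object condition, given $V \in {\rm Vect}^f(X)$ and an essentially finite sub-bundle $W \subset V|_Y$, I would choose a finite morphism $\pi \colon X' \to X$ from a normal projective variety corresponding to a surjection of $\pi^N(X)$ onto a finite group scheme $G$ such that $\pi^*V$ becomes trivial. Lemma \ref{bertini-normal-finite-mor}, applied to $\pi$, then guarantees that for $Y$ general of sufficiently high degree the preimage $Y' := \pi^{-1}(Y)$ is an irreducible normal hypersurface in $X'$. The pulled-back sub-bundle $\pi^*W$ is a sub-bundle of the trivial bundle $\pi^*V|_{Y'}$, canonically $G$-equivariant, and thus corresponds to a $G$-stable subspace of $H^0(Y', \pi^*V|_{Y'})$; this subspace extends (by the full-faithfulness step now applied on the normal variety $X'$) to a $G$-equivariant sub-bundle of $\pi^*V$ on all of $X'$, which then descends along $\pi$ to the desired essentially finite extension $V' \subset V$ on $X$.

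The principal technical obstacle is ensuring the rank-loci hypothesis of Lemma \ref{bertini-normal-finite-mor}---namely that $\codim A_j(\pi) \geq 2$---holds for the trivializing cover $\pi$. This is automatic when $\pi$ is \'etale, so the argument runs smoothly for the \'etale quotient of $\pi^N(X)$. For a quotient group scheme $G$ with nontrivial infinitesimal part, $\pi$ fails to be separable and the Bertini lemma does not apply directly; one must then split off the infinitesimal factor via the connected-\'etale sequence of $G$ and handle it separately, for instance by Cartier descent along the infinitesimal Frobenius-type cover. Managing this interplay between the Bertini theorems from Section 2 and the structure of finite group schemes appearing as quotients of $\pi^N(X)$ is where the bulk of the technical work in the proof should lie.
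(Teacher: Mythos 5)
Your reduction to the two Tannakian conditions (full faithfulness of $i^*$ and stability under subobjects, via \cite[Proposition 2.21(a)]{DM}) is exactly the paper's starting point, but both of your verifications have genuine gaps. For full faithfulness, surjectivity of $\pi^N(Y)\to\pi^N(X)$ requires $H^1(X,\mathcal{E}(-Y))=0$ simultaneously for \emph{every} essentially finite bundle $\mathcal{E}$ with one fixed $Y$; since ${\rm Vect}^f(X)$ contains bundles of unbounded rank, Serre vanishing only yields a degree threshold depending on $\mathcal{E}$ and cannot produce a single hypersurface working for all of them at once. The paper instead tensors $\mathcal{E}(-Y)$ with the Frobenius sequence $0\to\cO_X\to F_*\cO_X\to F_*B^1\to 0$, kills $H^0(X,F_*B^1\otimes\mathcal{E}(-Y))$ under the uniform condition $p\cdot\deg Y>\mu_{\max}(\Omega^1_X)$ (which depends only on $X$), and then iterates Frobenius pullback to embed $H^1(X,F_*\cO_X\otimes\mathcal{E}(-Y))$ into $H^1(X,F^{r*}\mathcal{E}\otimes\cO_X(-p^rY))$, which vanishes for $r$ large by Langer's boundedness of strongly semistable sheaves of slope zero and fixed rank. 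This uniformity in $\mathcal{E}$ is the entire content of the step, and your proposal does not supply it.

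For the subobject condition, your strategy of pulling back to a trivializing cover founders on the obstruction you yourself flag: an essentially finite bundle is trivialized by $(F^m_{X'})^*\varpi^*$ with $\varpi$ \'etale Galois, the Frobenius factor is inseparable, so Lemma \ref{bertini-normal-finite-mor} does not apply to the composite, and the proposed ``Cartier descent'' repair is only named, not carried out. The paper avoids trivializing covers entirely here: it shows that the socle of an essentially finite bundle is essentially finite, proves $Soc(i^*E)=i^*Soc(E)$ using the Bogomolov--Langer restriction theorem for stable bundles together with the already-established full faithfulness, and then lifts an arbitrary essentially finite subbundle of $i^*E$ by induction on the rank along the socle filtration. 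As written, your argument establishes neither of the two conditions, so it is incomplete at both of its essential points.
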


\begin{proof}
Let
\begin{equation}\label{ei}
i\, :\, Y\, \hookrightarrow\, X
\end{equation}
be the inclusion map.
Consider the functor $i^*\, :\, {\rm Vect}^{f}(X)\,\longrightarrow\, {\rm Vect}^f(Y)$, between
the categories of essentially finite vector bundles, that sends any vector bundle $W$ on $X$ to $i^*W$. To
prove the theorem it suffices to show the following:
\begin{enumerate}
\item $i^*$ is fully faithful, and

\item for any $E\, \in\, {\rm Vect}^{f}(X)$, every essentially finite subbundle of $i^* E$ is of the
form $i^*{\mathcal F}$ for some essentially finite subbundle
${\mathcal F}\, \subset\, E$.
\end{enumerate}
(See \cite[p.~139, Proposition 2.21(a)]{DM}.)

For essentially finite vector bundles $V_1$ and $V_2$ on $X$, define
$${\mathcal E} \,:=\, {\mathcal H}om (V_1,\,V_2)\,=\, V_2\otimes V^*_1\, ,$$
which is also essentially finite \cite[p.~82, Proposition 3.7(c)]{No}. To prove the above
statement (1), we need to show that the restriction homomorphism
\begin{equation}\label{is}
i^*\, :\, H^0(X,\, {\mathcal E})\, \longrightarrow\, H^0(Y,\, i^*{\mathcal E})
\end{equation}
is an isomorphism.

For notational convenience, we shall denote the vector
bundle ${\mathcal E}\otimes {\mathcal O}_X(-Y)$ by ${\mathcal E}(-Y)$.

Tensor the short exact sequence of sheaves on $X$
$$0\,\longrightarrow\, {\mathcal O}_X (-Y)\,\longrightarrow\, {\mathcal O}_X
\,\longrightarrow\, i_*{\mathcal O}_Y \,\longrightarrow\, 0$$
with ${\mathcal E}$. Let
$$
0 \, \longrightarrow\,
H^0(X,\, {\mathcal E}\otimes {\mathcal O}_X(-Y))\, \longrightarrow\,
H^0(X,\, {\mathcal E})\, \stackrel{i^*}{\longrightarrow}\, H^0(Y,\, i^*{\mathcal E})
$$
$$
\longrightarrow\,H^1(X,\, {\mathcal E}(-Y)) \, \longrightarrow\, \cdots
$$
be the corresponding long exact sequence of cohomologies. From this exact sequence we conclude that $i^*$ in
\eqref{is} is an isomorphism if
\begin{equation}\label{is2}
H^0(X,\, {\mathcal E}(-Y))\,=\,0\, =\, H^1(X,\, {\mathcal E}(-Y))\, .
\end{equation}
Now, $H^0(X,\, {\mathcal E}(-Y))\,=\,0$ because ${\mathcal E}(-Y)$
is a semistable vector bundle of negative degree.

Consider the Frobenius map $F\, :\,X\,\longrightarrow\, X$. We have the short exact sequence
\begin{equation}\label{k3}
0\, \longrightarrow\, {\mathcal O}_X \,\longrightarrow\, F_*{\mathcal O}_X \,\longrightarrow\, Q
\, \longrightarrow\, 0
\end{equation}
of coherent sheaves on $X$ where the first map is locally given by $x\longmapsto x^p$ and $Q$ is the cokernel of this map. Tensoring it with ${\mathcal E}(-Y)$, we get the short exact sequence
\begin{equation}\label{is3}
0\, \longrightarrow\, {\mathcal E}(-Y)\, \longrightarrow\, (F_*{\mathcal O}_X)\otimes{\mathcal E}(-Y)
\, \longrightarrow\,Q\otimes{\mathcal E}(-Y) \, \longrightarrow\, 0
\end{equation}
on $X$.

Since the sequence in \eqref{k3} is an exact sequence of coherent sheaves,
tensor product of \eqref{k3} with ${\mathcal E}(-Y)$ remains exact as ${\mathcal E}(-Y)$ is locally free. The
long exact sequence of cohomologies associated to \eqref{is3} gives
$$
\cdots \, \longrightarrow\, H^0(X,\, Q \otimes {\mathcal E}(-Y))\, \longrightarrow\, H^1(X,\, {\mathcal E}(-Y))
\,\longrightarrow\,H^1(X,\, F_*{\mathcal O}_X\otimes{\mathcal E}(-Y))\,\longrightarrow\, \cdots \, .$$

Hence to prove that $H^1(X,\, {\mathcal E}(-Y))\,=\, 0$ (see \eqref{is2})
it suffices to show that
\begin{equation}\label{k4}
H^0(X, \,Q\otimes{\mathcal E}(-Y))\, =\,0\,=\,
H^1(X,\, F_*{\mathcal O}_X\otimes{\mathcal E}(-Y))\, .
\end{equation}

Now $ {\rm deg}~Y\,>\, \mu_{\rm max}(Q_P)$ for every associated prime $P$ of $Q$. Hence by Proposition \ref{proQ},
$H^0(X,\, Q \otimes{\mathcal E}(-Y)) \,=\,0$.

So to prove \eqref{k4} it remains to show that $H^1(X,\, F_*{\mathcal O}_X\otimes{\mathcal E}(-Y))\, =\,0$.

To prove that $H^1(X,\, F_*{\mathcal O}_X\otimes {\mathcal E}(-Y))\, =\,0$, note that 
\begin{align*}
H^1(X,\, F_*{\mathcal O}_X\otimes{\mathcal E}(-Y))&\,=\,H^1(X,\,F_*F^*{\mathcal E}(-Y)) \text{ (projection formula)}\\
&=\,H^1(X,\,F^*({\mathcal E}(-Y))) \text{ (finiteness of Frobenius map)}\\
&=\,H^1(X,\,F^*{\mathcal E}\otimes\cO_X(-pY)) \\
&\subset\, H^1(X,\,F_*F^*{\mathcal E}\otimes \cO_X(-pY))\\
& \text{ (as }\, H^0(X,\, Q \otimes F^*{\mathcal E}(-Y))\,=\,0 \text{ (Proposition \ref{proQ}))}\\
&=\,H^1(X,\,F^{2*}{\mathcal E}\otimes\cO_X(-p^2Y)) \text{~ (applying previous steps)}\\
&\subset\, H^1(X,\,F^{3*}{\mathcal E}\otimes\cO_X(-p^3Y)) \,\subset\, \cdots .
\end{align*}
We have $\text{rank}(F^{r*}{\mathcal E})\,=\, \text{rank}({\mathcal E})$,
and $F^{r*}{\mathcal E}$ is strongly semistable of slope zero; hence $H^1(X,
\,F^{r*}{\mathcal E}\otimes\cO_X(-p^rY))\,=\,0$ for $r$ sufficiently large (depending on the rank of ${\mathcal E}$)
by boundedness of semistable sheaves of fixed
numerical invariant \cite{La1}. Consequently, we have $H^1(X,\, F_*{\mathcal O}_X\otimes
{\mathcal E}(-Y))\, =\,0$.

This proves \eqref{k4}. As noted before, this implies that $i^*\, :\, {\rm Vect}^{f}(X)\,\longrightarrow\, {\rm 
Vect}^f(Y)$ is fully faithful.

Now we will show that every essentially finite subbundle of $i^*E$ is of the form $i^*F$ for some
subbundle $F$ of $E$.

For a semistable bundle $V$, its socle, which is denoted by $Soc(V)$, is the unique maximal polystable subsheaf
of $V$ \cite[p.~23, Lemma 1.5.5]{HL}, \cite[p.~1034, Proposition 3.1]{BDL}.

It can be shown that the socle $Soc(V)$ of an
essentially finite vector bundle $V$ on $X$ is an essentially finite subbundle of $V$. To prove this, we recall
that the condition that $V$ is essentially finite implies that there is a connected \'etale Galois covering
$$
\varpi\, :\, Y\, \longrightarrow\, X\, ,
$$
and a positive integer $m$, such that $(F^m_Y)^*\varpi^*V$ is trivial, where $F_Y\, :\, Y\, \longrightarrow\, Y$
is the Frobenius morphism of $Y$ (see \cite[p.~557]{BH}). If $W\,\subset\,{\mathcal O}^{\oplus r}_Y$ is
a coherent subsheaf such that $\text{degree}(W)\,=\, 0$ and ${\mathcal O}^{\oplus r}_Y/W$ is torsionfree, then
$W$ is actually a subbundle of ${\mathcal O}^{\oplus r}_Y$ and it is in fact trivializable. In particular,
$(F^m_Y)^*\varpi^*Soc(V)$ is trivializable. This implies that $Soc(V)$ is an
essentially finite vector bundle (see \cite[Proposition 2.3]{BH}).

First note that for any subbundle $W\,\subset\,E$ of degree zero we have
$Soc(W) \,=\, Soc(E) \cap W$. It is evident from the definition that $Soc(E)\cap W
\,\subset\, Soc(W)$. If $W_1$ is an stable subbundle of $W$ of degree zero, 
then $W_1\,\subset\, Soc(E)$ as $Soc(E)$ is generated by all stable subbundles
of degree zero. This implies that that reverse inclusion $Soc(E)\cap W\,\supset\, Soc(W)$ holds. 

Now we show that $Soc(i^*E)$ coincides with $i^*Soc(E)$, where $i$ is the map in \eqref{ei}. Clearly
$$Soc(i^*E)\,\supset\, i^*Soc(E)$$ by a variant of Bogomolov restriction theorem, (\cite[Theorem 0.5]{La2}, also see \cite{Bo}, \cite{La1}), that stable bundles restrict to stable bundles
when the degree of the ample hypersurface is sufficiently large. We note that the this result of \cite{La2}
is under the assumption that the characteristic of the base field is positive.
In the case characteristic zero, the result follows from the result on positive characteristics by the usual
spreading out argument and the openness of the stability condition (in mixed characteristic)
\cite[p.~635, Theorem 2.8(B)]{Ma}. (We thank the referee for pointing this out.)
As $Soc(E)$ is essentially finite, so its chern classes $c_1$ and $c_2$ are zero. Hence there is no restriction on the  degree of the ample hypersurface. 
Let $W$ be a stable 
bundle on $Y$. If $W$ is a subbundle of $i^*E/i^*Soc(E)$, then by induction on the rank there exists a subbundle 
$\widetilde W$ of $E/Soc(E)$ on $X$ such that $i^*\widetilde{W}\,=\,W$. Consider the following commutative 
diagram:
\begin{equation}\label{t1}
\begin{matrix}
H^0(X,\, Hom(\widetilde{W},\,E)) & \stackrel{\alpha}{\longrightarrow} & H^0(X,\, Hom(\widetilde{W},\,E/Soc(E)))\\
\Big\downarrow && \Big\downarrow\\
H^0(Y,\, Hom(W,\,i^*E))&\stackrel{i^*\alpha}{\longrightarrow} & H^0(Y,\, Hom(W,\,i^*E/i^*Soc(E)))
\end{matrix}
\end{equation}
Recall that for any semistable bundle $V$ of degree zero, a subbundle $S$ of
degree zero contains $Soc(V)$ if and 
only if for any stable bundle $W$ of degree zero the natural homomorphism $H^0(Y,\, Hom(W,\,V))\,\longrightarrow\, 
H^0(Y,\, Hom(W,\,V/S))$ is the zero map.

Now by the definition of a socle, $\alpha$ is the zero map. By fully faithfulness of $i^*$, vertical arrows in 
\eqref{t1} are isomorphisms. Hence $i^*\alpha$ is also the zero map. If $W$ is not a subbundle of 
$i^*E/i^*Soc(E)$ then $H^0(Y,\, Hom(W,\,i^*(E/Soc(E)))\,=\,0$ and hence again $H^0(Y,\, 
Hom(W,\,i^*E))\,\longrightarrow\, H^0(Y,\, Hom(W,\, i^*E/i^*Soc(E)))$ is the zero map. Consequently, we have 
$i^*Soc(E)\,\supset\, Soc(i^*E)$.

Now let $W\,\subset\, i^*E$ be an essentially finite subbundle. Since Socle filtration of $W$ is induced from the 
socle filtration of $E$, we conclude that $Soc(W)$ lifts to a subbundle $W'$ of $Soc(E)$. By induction on the rank, $W/Soc(W)$ lifts 
to a subbundle $W''$ of $E/Soc(E)$. We have $H^0(X,\, Hom(W'',\, E/W'))\,=\,H^0(Y,\, Hom(W/Soc(W),\, i^*E/Soc(W)))$,
hence $W''$ is subbundle of $E/W'$. 
Let $\widetilde{W}$ be the inverse image of $W''$ in $E$. Then $i^*\widetilde{W} \,= \,W$. Therefore,
every essentially finite subbundle of $i^*E$ is of the form $i^*F$ for some
subbundle $F$ of $E$.
\end{proof}

\section{Lefschetz theorem for $\pi^N(X,P)$}

In this section we shall prove a variant of Lefschetz theorem for Nori fundamental group of formal orbifolds 
under a certain hypothesis.

We refer the reader to \cite[Section 2]{Nori-pi1} (and \cite{formal.orbifolds} for further details) for the 
definitions of branch data $P$ on a normal variety $X$, a formal orbifold $(X,\,P)$, their \'etale covers and 
their \'etale fundamental group. Also recall that for a finite cover $f\,:\,Z\,\longrightarrow \, X$ between
normal varieties, $B_f$ 
is the branch data on $X$ induced from $f$. A formal orbifold $(X,\,P)$ is called geometric if there exists an 
\'etale cover $f\,:\,(Z,\,O)\,\longrightarrow \, (X,\,P)$, where $O$ is the trivial branch data. We recall
from \cite[Section 2]{Nori-pi1} the 
definition of the Nori fundamental group $\pi^N(X,\,P)$ of a geometric formal orbifold $(X,\,P)$ from 
as the Tannaka dual of the Tannakian category $\Vect^f(X,\,P)$ of essentially 
finite vector bundles on $(X,\,P)$. For $X$ a curve these notions were 
introduced in \cite{KP}.

Assume that $(X,\,P)$ is a smooth projective geometric formal orbifold over an algebraically closed field
$\mathbf k$. Let $Y$ be 
a normal connected hypersurface of $X$ not contained in $\BL(P)$, and let $\scrp$ be the ideal sheaf defining $Y$ in 
$X$. We recall from \cite{formal.orbifolds} the definition of a branch data $P_{|Y}$. For a point $x\,\in\, Y$ 
of co-dimension at least one, let $U$ be an open affine neighborhood of $x$ in $X$. Note that 
$\scrp\widehat{\cO_X(U)}^x$ is a prime ideal in $\widehat{\cO_X(U)}^x$. Let $R$ be the integral closure of 
$\widehat{\cO_X(U)}^x$ in $P(x,U)$, and let $\scrq_1,\,\cdots,\,\scrq_r$ be the prime ideals of $R$ lying above $\scrp$. 
Define $P_{|Y}(x,U\cap Y)$ to be the compositum of Galois extensions $\QF(R/\scrq_i)$ of 
$\widehat{\cO_X(U)}^x/\scrp$. We shall denote by
$P^g_{|Y}$ the maximum geometric branch data on $Y$ which is less than or equal to $P_{|Y}$.

For an \'etale covering $(Z,\,O)\,\longrightarrow\, (X,\,P)$, let $W$ denote the normalization of the fiber 
product $Z\times_X Y$, and let $g\,:\,W\,\longrightarrow\, Y$ be the projection map. Then $g\,:\, 
(W,\,g^*P^g_{|Y})\,\longrightarrow\, (Y,\,P^g_{|Y})$ is an \'etale cover by \cite[Proposition 
8.3]{formal.orbifolds}.

\begin{pro}
Let $f\,:\,(Z,\,O)\,\longrightarrow\,(X,\,P)$ be an \'etale morphism of formal orbifolds. Let $Y\,\subset\, X$
be a normal hypersurface such that $W\,=\,f^{-1}(Y)$ is an irreducible normal hypersurface, and let $g$ denote
the restriction $f_{|W}$. Then $g$ is a finite morphism between normal varieties, and $P_{|Y}\,=\,B_g$.
\end{pro}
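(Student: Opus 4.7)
My plan is to verify the three assertions of the proposition in turn: finiteness of $g$, normality of the two varieties involved, and the branch-data identity $P_{|Y} = B_g$. The first two are essentially immediate from the hypotheses. Since $f: Z \to X$ is finite (being a finite cover in the formal orbifold sense) and $W = f^{-1}(Y)$ is the scheme-theoretic preimage of $Y$, the restriction $g = f|_W : W \to Y$ is finite by base change. Normality of $W$ is postulated, and $Y$ is normal by hypothesis, so $g$ is a finite morphism between normal varieties.

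The core of the proof is the equality $P_{|Y} = B_g$, which I would establish locally at each point $x \in Y$ of codimension at least one. Fix an open affine neighborhood $U$ of $x$ in $X$, and let $\scrp \subset \widehat{\cO_X(U)}^x$ be the prime ideal defining $Y$. Let $R$ be the integral closure of $\widehat{\cO_X(U)}^x$ inside the Galois extension $P(x, U)$, with $\scrq_1, \ldots, \scrq_s$ the primes of $R$ lying over $\scrp$. By the definition of $P_{|Y}$ recalled just before the proposition, $P_{|Y}(x, U \cap Y)$ is the Galois compositum of the $\QF(R/\scrq_j)$'s over $\QF(\widehat{\cO_X(U)}^x / \scrp)$. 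The target is to match this with the compositum of the $\QF(\widehat{\cO_{W, w_j}})$'s over the points $w_j$ of $W$ mapping to $x$, which by definition is $B_g(x, U \cap Y)$.

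The main obstacle, and the place where \'etaleness of $f : (Z, O) \to (X, P)$ is genuinely used, is the identification of $R$ with the product $\prod_i \widehat{\cO_{Z, z_i}}$, where $z_1, \ldots, z_r$ are the points of $Z$ mapping to $x$. For this I would invoke the local theory of \'etale covers of formal orbifolds from \cite{formal.orbifolds}: \'etaleness of $(Z,O)\to(X,P)$ forces the branch data $P$ to be exactly realized by $f$ at each point, so that the normalization of $\mathrm{Spec}(\widehat{\cO_X(U)}^x)$ inside $P(x,U)$ is precisely the disjoint union of $\mathrm{Spec}(\widehat{\cO_{Z,z_i}})$'s. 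Granting this, the primes $\scrq_j$ of $R$ lying over $\scrp$ correspond bijectively with the points $w_j \in f^{-1}(Y) = W$ above $x$, and $R/\scrq_j \cong \widehat{\cO_{W,w_j}}$ (using that $W$ is normal so that these quotients are integrally closed local rings). The two compositums then agree term by term, giving $P_{|Y}(x, U \cap Y) = B_g(x, U \cap Y)$ at every codimension-$\geq 1$ point of $Y$, hence $P_{|Y} = B_g$ globally.
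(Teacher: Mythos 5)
Your reduction to a pointwise comparison at codimension-$\ge 1$ points of $Y$ is a reasonable, more computational route than the paper's, which is essentially two lines: since $W=f^{-1}(Y)=Z\times_XY$ is irreducible and normal, the restriction statement quoted just before the proposition (from \cite[Proposition 8.3]{formal.orbifolds}) shows that $g\colon (W,O)\to (Y,P_{|Y})$ is itself an \'etale cover of formal orbifolds, and any \'etale cover with trivial branch data on the source forces the branch data on the target to equal $B_g$ (the two defining conditions $B_g\le P_{|Y}$ and $g^*P_{|Y}=O$ squeeze $P_{|Y}$ between $B_g$ and the local field extensions generating it). Note this is exactly where the irreducibility of $W$ is used; your argument never invokes it, which is already a warning sign.

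The genuine gap is in your identification of $R$. By definition $R$ is the integral closure of $\widehat{\cO_X(U)}^x$ in the \emph{field} $P(x,U)$, so it is a domain and cannot be the product (or disjoint union of spectra) of the completions of $Z$ at the points $z_1,\dots,z_r$ above $x$. What \'etaleness of $(Z,O)\to(X,P)$ actually yields is that $\QF\bigl(\widehat{\cO_Z(f^{-1}U)}^{z_i}\bigr)=P(x,U)$ for \emph{each} $i$, so that $R$ is isomorphic to any single one of these completions. Consequently the primes $\scrq_1,\dots,\scrq_r$ of $R$ over $\scrp$ do not biject with the points of $W$ above $x$: all the $z_i$ already lie on $W$ (since $f(z_i)=x\in Y$), so the points of $W$ above $x$ are precisely the $z_i$, whereas the $\scrq_j$ correspond to the formal branches of $W$ through one chosen $z_{i_0}$ inside $\widehat{\cO_Z(f^{-1}U)}^{z_{i_0}}$. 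To close the argument you would still have to show that the compositum of the $\QF(R/\scrq_j)$ equals the Galois closure of the compositum of $\QF\bigl(\widehat{\cO_W(g^{-1}(U\cap Y))}^{w}\bigr)$ taken over \emph{all} points $w$ of $W$ above $x$; this needs the action of $\mathrm{Gal}\bigl(P(x,U)/\QF(\widehat{\cO_X(U)}^x)\bigr)$ permuting the $\scrq_j$ and the $z_i$, together with normality and irreducibility of $W$. As written, that step is missing, though the approach is repairable.
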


\begin{proof}
Since $W\,=\,f^{-1}(Y)\,=\,Z\times_X Y$ is irreducible and normal, it follows that $g\,:\,(W,\,O)
\,\longrightarrow\, (Y,\,P_{|Y})$ is an \'etale cover. Hence $P_{|Y}\,=\,B_g$.
\end{proof}

\begin{definition}\label{super-separable}
A geometric formal orbifold $(X,\,P)$ is said to be \emph{1-separable} if there exists an \'etale Galois
cover $f\,:\,(Z,\,O)\,\longrightarrow\, (X,\,P)$ such that the codimension of $A_0(f)$ in $Z$ is at least
two. More generally, it is $m$-separable if $\codim(A_j)\,>\,1$ for $j\,\le\, m-1$ and $\codim(A_m)
\,>\,0$. Moreover if the above holds for $m\,=\,\dim(X)-1$, and the restriction of $f$ to all
the irreducible components of the ramification divisor of $f$ is separable, then
$(X,\,P)$ is called \emph{super-separable}.
\end{definition}

\begin{remark}
If $(X,\,P)$ is 1-separable then for any Galois \'etale cover $f\,:\,(Z,\,O)\,\longrightarrow\, (X,\,P)$ the
codimension of $A_0(f)$ in $Z$ is at least two.
\end{remark}

\begin{thm}\label{lefschetz-thm}
Let $(X,\,P)$ be a projective geometric formal orbifold, over an algebraically closed field $\mathbf k$, which is
1-separable. Then there exists a normal hypersurface $Y\,\subset\, X$ such that $\pi^N(Y,\,P_{|Y})
\,\longrightarrow\, \pi^N(X,\,P)$ is surjective with $Y_{sing}\,\subset\, X_{sing}$.
 
Moreover, if $(X,\,P)$ is smooth, and there is a smooth \'etale cover $(Z,\,0)\,\longrightarrow\, (X,\,P)$, then
the above map is an isomorphism if $\dim X\,\geq\, 3$.
\end{thm}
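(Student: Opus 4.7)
The plan is to reduce surjectivity in the orbifold setting to the Lefschetz theorem for normal projective varieties (Theorem \ref{thmnl}) by passing to the \'etale Galois cover supplied by 1-separability, and then descending the conclusion along the Galois group. Since $(X,\,P)$ is 1-separable, I would fix an \'etale Galois cover $f\,:\,(Z,\,O)\,\longrightarrow\,(X,\,P)$, with Galois group $G$, such that $\codim_Z A_0(f)\,\ge\,2$; so $Z$ is a normal projective variety and the underlying map $Z\,\longrightarrow\,X$ is finite and separable. Applying Lemma \ref{bertini-normal-finite-mor} to this morphism, I would choose an ample hypersurface $Y\,\subset\,X$ of sufficiently high degree---large enough to also satisfy the degree hypothesis of Theorem \ref{thmnl} applied to $Z$---such that $Y$ and $W\,:=\,f^{-1}(Y)$ are both irreducible normal projective varieties; genericity additionally ensures $Y_{sing}\,\subset\,X_{sing}$, and by the Proposition preceding Definition \ref{super-separable} the restriction $(W,\,O)\,\longrightarrow\,(Y,\,P_{|Y})$ is again an \'etale cover, so $(Y,\,P_{|Y})$ is geometric.

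Next I would identify $\Vect^f(X,\,P)$ with the category of $G$-equivariant objects of $\Vect^f(Z)$, and analogously $\Vect^f(Y,\,P_{|Y})$ with $G$-equivariant objects of $\Vect^f(W)$; under these identifications the restriction functor $i^*\,:\,\Vect^f(X,\,P)\,\longrightarrow\,\Vect^f(Y,\,P_{|Y})$ corresponds to the equivariant restriction $\Vect^f(Z)\,\longrightarrow\,\Vect^f(W)$. Theorem \ref{thmnl} applied to $W\,\subset\,Z$ then asserts precisely that this latter restriction is fully faithful and that every essentially finite subbundle on $W$ is induced from a subbundle on $Z$. Since the morphisms and subbundle lifts produced in the proof of Theorem \ref{thmnl} are canonical---they are constructed via socles and the commutative diagram \eqref{t1}---they are automatically $G$-equivariant, so both properties descend to the equivariant subcategories.

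By the Tannakian criterion \cite[p.~139, Proposition 2.21(a)]{DM}, these two properties are precisely what is needed for $\pi^N(Y,\,P_{|Y})\,\longrightarrow\,\pi^N(X,\,P)$ to be faithfully flat, i.e., surjective. For the \emph{moreover} part, suppose $(X,\,P)$ is smooth and admits a smooth \'etale cover $(Z,\,O)\,\longrightarrow\,(X,\,P)$. Then $Z$ is a smooth projective variety, and for a general choice of $Y$ the pullback $W\,=\,f^{-1}(Y)$ is a smooth hypersurface in $Z$. When $\dim X\,\ge\,3$, the smooth Lefschetz theorem of \cite{BH} strengthens the conclusion of Theorem \ref{thmnl} for $W\,\subset\,Z$ to an equivalence $\Vect^f(Z)\,\simeq\,\Vect^f(W)$, and the same equivariant descent then yields the claimed isomorphism $\pi^N(Y,\,P_{|Y})\,\cong\,\pi^N(X,\,P)$.

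The main obstacle lies in the descent step: making rigorous the identification of $\Vect^f(X,\,P)$ with $G$-equivariant essentially finite bundles on $Z$ (and analogously for $Y$), and verifying that this identification is compatible with restriction and with formation of subobjects, so that fully faithfulness and stability under subobjects transfer cleanly from the covers to the equivariant categories. The geometric input---irreducibility and normality of $W\,=\,f^{-1}(Y)$---is guaranteed by the 1-separability hypothesis through Lemma \ref{bertini-normal-finite-mor}; without the codimension bound on $A_0(f)$, the pullback $W$ could fail to be normal or even irreducible, and the argument would collapse before Theorem \ref{thmnl} could be invoked.
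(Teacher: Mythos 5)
Your proposal is correct and follows the same overall path as the paper: pass to the \'etale $G$--Galois cover $f\,:\,(Z,\,O)\,\longrightarrow\,(X,\,P)$ provided by $1$-separability, use Lemma \ref{bertini-normal-finite-mor} to choose $Y$ with both $Y$ and $W\,=\,f^{-1}(Y)$ irreducible and normal (and $Y_{sing}\,\subset\, X_{sing}$), apply Theorem \ref{thmnl} to $W\,\subset\, Z$, and invoke \cite{BH} for injectivity in the smooth case with $\dim X\,\geq\,3$. The only point where you diverge is the descent from the cover to the orbifold: the paper does this at the level of group schemes, observing that $\pi^N(X,\,P)$ and $\pi^N(Y,\,P_{|Y})$ are extensions of $G$ by $\pi^N(Z)$ and $\pi^N(W)$ respectively, so surjectivity of $\pi^N(W)\,\longrightarrow\,\pi^N(Z)$ together with the identity on $G$ forces surjectivity of the middle map; you instead work categorically, identifying $\Vect^f(X,\,P)$ and $\Vect^f(Y,\,P_{|Y})$ with $G$-equivariant objects of $\Vect^f(Z)$ and $\Vect^f(W)$ and re-verifying the two conditions of \cite[Proposition 2.21(a)]{DM} equivariantly. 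These are two formulations of the same descent. Your route carries one extra verification, which you gesture at but could state more sharply: equivariance of the lifted subbundle is best seen not from canonicity of the socle construction but from uniqueness of lifts under full faithfulness (for $g\,\in\,G$, the translate $g\cdot\mathcal{F}$ of a lift $\mathcal{F}\,\subset\, E$ of a $G$-stable subbundle of $i^*E$ is again a lift of the same subbundle, hence equals $\mathcal{F}$). The paper's extension argument buys brevity; your equivariant argument makes explicit the Tannakian mechanism that the paper's one-line appeal to the extension structure leaves implicit.
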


\begin{proof}
Since $(X,\,P)$ is 1-separable geometric, we know that there is a normal \'etale $G$-Galois cover $f\,:\, 
(Z,\,0)\,\longrightarrow\, (X,\,P)$ such that $\codim(A_0(f))\,>\,1$. Consider a very ample line bundle 
${\mathcal L}$ on $X$. Then sections that are pull back of
sections of ${\mathcal L^m}$ form a sublinear system $f^* 
|{\mathcal L^m}|$ of the complete linear system given by the sections of $f^*{\mathcal L^m}$ for all 
$m\,\geq\, 1$. Since ${\mathcal L}$ is very ample, it is in particular base point free and hence so is $f^* 
|{\mathcal L^m}|$. Then by Bertini's theorem, a general section $W\,\subset\, Z$ is normal. This induces a 
surjection $\pi^N(W)\,\longrightarrow\, \pi^N (Z)$ of Nori fundamental group schemes when $\dim Z \,\geq\, 2$ 
for $m \,>>\,0$ (see Theorem \ref{thmnl}).

Furthermore, if $Z$ is smooth and $\dim Z\,\geq\, 3$, then the homomorphism $\pi^N(W)
\,\longrightarrow\, \pi^N (Z)$ is injective as well
\cite[p.~549, Theorem 1.1]{BH}.

By Bertini theorem (see Lemma \ref{bertini-normal-finite-mor}) a general section $Y\,\subset\, X$
of ${\mathcal L}^m$ is normal and its pullback $W\,\subset\, Z$ in $f^*|{\mathcal L}^m|$ is also normal. Moreover,
$Y_{sing}\,\subset\, X_{sing}$. Since $\pi^N(X,\,P)$ (respectively, $\pi^N(Y,\,P_{|Y})$)
is an extension of the Galois group $G$ by the fundamental group scheme $\pi^N(Z)$ (respectively, $\pi^N(W)$).
Hence $\pi^N(Y,\,P{|Y})\,\longrightarrow\, 
\pi^N(X,\,P)$ is a surjection if $\dim X\, \geq\, 2$.
Moreover, if $X$ and $Z$ are smooth, it is an isomorphism provided $\dim X\,\geq\, 3$. 
\end{proof}

\begin{cor}
The above theorem is true with Nori fundamental group replaced by the \'etale fundamental group.
\end{cor}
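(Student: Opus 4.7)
The plan is to mirror the proof of Theorem \ref{lefschetz-thm} verbatim, replacing each appeal to a Nori-fundamental-group Lefschetz statement by its \'etale counterpart. Concretely, since $(X,P)$ is $1$-separable, pick an \'etale Galois $G$-cover $f\,:\,(Z,O)\,\longrightarrow\,(X,P)$ with $\codim(A_0(f))\,\geq\,2$ in $Z$. Fix a very ample line bundle ${\mathcal L}$ on $X$; for $m\gg 0$ the sublinear system $f^*|{\mathcal L}^m|\,\subset\, |f^*{\mathcal L}^m|$ is base point free, so by Lemma \ref{bertini-normal-finite-mor} a general $Y\,\in\,|{\mathcal L}^m|$ is normal, its pullback $W\,=\,f^{-1}(Y)\,\subset\, Z$ is normal and irreducible, and $Y_{sing}\,\subset\, X_{sing}$.

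The key inputs then are: the classical Grothendieck--Lefschetz theorem for the \'etale fundamental group of normal projective varieties (SGA~2) asserts that for $Y$ a sufficiently ample normal hypersurface in the normal projective variety $Z$ of dimension $\geq 2$, the induced map $\pi_1^{et}(W)\,\longrightarrow\,\pi_1^{et}(Z)$ is surjective; and moreover if $Z$ is smooth and $\dim Z\,\geq\,3$ then this map is an isomorphism. Applying this to the pair $W\,\subset\,Z$ (noting that the ampleness hypothesis is satisfied because $W\,\in\, f^*|{\mathcal L}^m|$ for $m\gg 0$) yields the required surjectivity in the normal case, and isomorphism in the smooth case with $\dim X\,\geq\, 3$.

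Finally I would conclude exactly as in Theorem \ref{lefschetz-thm}. By the definition of an \'etale cover of a formal orbifold, $\pi_1^{et}(X,P)$ fits into a short exact sequence
$$
1\,\longrightarrow\,\pi_1^{et}(Z)\,\longrightarrow\,\pi_1^{et}(X,P)\,\longrightarrow\,G\,\longrightarrow\,1\, ,
$$
and the analogous sequence holds for $\pi_1^{et}(Y,P_{|Y})$ with $Z$ replaced by $W$ (using that $g\,:\,(W,O)\,\longrightarrow\,(Y,P_{|Y})$ is an \'etale Galois $G$-cover by the preceding proposition). The natural restriction map fits into a commutative diagram of these two sequences with identity on the $G$ piece, so surjectivity (resp.\ bijectivity) of the middle arrow follows from surjectivity (resp.\ bijectivity) of $\pi_1^{et}(W)\,\to\,\pi_1^{et}(Z)$ by the five lemma.

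The only genuine obstacle is checking that the classical Lefschetz hypothesis (sufficient ampleness of $W\subset Z$ relative to the normal/smooth structure of $Z$) is actually applicable to the sublinear system $f^*|{\mathcal L}^m|$ rather than to the full complete linear system on $Z$. This is handled exactly as in the proof of Theorem \ref{lefschetz-thm}: since $f$ is finite, $f^*{\mathcal L}^m$ is ample on $Z$ for $m\gg 0$, and Bertini together with Lemma \ref{bertini-normal-finite-mor} ensures the generic member of $f^*|{\mathcal L}^m|$ has all the normality properties needed for the SGA~2 Lefschetz theorem to apply.
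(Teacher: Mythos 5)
Your argument is correct, but it is a genuinely different route from the one the paper takes. The paper does not reprove the Lefschetz statement in the \'etale setting; it deduces the corollary directly from Theorem \ref{lefschetz-thm} by a short Tannakian observation: $\pi_1^{et}(X,P)$ is the quotient of $\pi^N(X,P)$ corresponding to the full subcategory of \'etale finite bundles inside $\Vect^f(X,P)$, so it suffices to check that \'etale-finiteness is both preserved and detected by restriction to $Y$. Preservation is immediate, and detection is argued via the fact that $F$--trivial bundles restrict to $F$--trivial bundles together with the full faithfulness of $\Vect^f(X,P)\to\Vect^f(Y,P_{|Y})$ already established. Your proposal instead reruns the entire proof of Theorem \ref{lefschetz-thm}, replacing the appeal to Theorem \ref{thmnl} and \cite{BH} by the Grothendieck--Lefschetz theorem of SGA~2 for $\pi_1^{et}$ of the cover $W\subset Z$, and then transfers the conclusion across the extensions of $G$ by $\pi_1^{et}(Z)$ and $\pi_1^{et}(W)$ via the five lemma. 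This works: the surjectivity input for normal $Z$ and the isomorphism for smooth $Z$ of dimension $\geq 3$ are classical, $f^*\mathcal L^m$ is ample since $f$ is finite, and the commutativity of the diagram of extensions is clear because $W=f^{-1}(Y)$. What your route buys is logical independence from the Nori-group theorem (and from the semistability/Frobenius machinery behind Theorem \ref{thmnl}), at the cost of importing SGA~2 and re-verifying the extension structure for the \'etale groups; the paper's route is shorter and leverages the harder result it has already proved. Note also that your argument silently inherits the same point the paper's proof of Theorem \ref{lefschetz-thm} relies on, namely that $1$-separability suffices to invoke Lemma \ref{bertini-normal-finite-mor} for the normality of $W$; since the corollary only claims the theorem with $\pi^N$ replaced by $\pi_1^{et}$, this is not an additional gap on your part.
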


\begin{proof}
Note that \'etale finite bundles restricted to a hypersurface are \'etale finite. Also if an
essentially finite bundle $V$ on $(X,\,P)$ restricts to an \'etale finite bundle on $(Y,\,P_{|Y})$, then
$V$ must be \'etale finite. Indeed, this is a consequence of the fact that $F$--trivial bundles on $(X,\,P)$ restrict
to $F$--trivial bundles on $(Y,\,P_{|Y})$ and the functor
$\Vect^f(X,\,P)\,\longrightarrow\,\Vect^f(Y,\,P_{|Y})$ given by restriction to $Y$ is fully faithful. 
\end{proof}

\section{Restriction of $\ell$-adic sheaves to curves}

Let $X$ be a normal projective variety over the algebraically closed base field $\mathbf k$ of characteristic $p
\,>\,0$, 
and let $X^o$ be an open subset contained in the smooth locus of $X$ such that $X\setminus X^o$ is the support of an 
effective Cartier divisor. Let $P$ be a branch data on $X$ such that $\BL(P)\cap X^o \,=\,\emptyset$.

A finite $\ell$--adic sheaf $\scrF$ on $X^o$ is a locally constant sheaf of ${\mathbb Z}_l$--modules $\Lambda$ with 
$\Lambda$ a finite set. Note that by \cite[Proposition 9.9]{formal.orbifolds} given any finite $\ell$--adic 
sheaf $\scrF$ on $X^o$, there exists a geometric branch data $P$ on $X$ with $\BL(P)\cap X^o\,=\,\emptyset$ such 
that $\scrF$ extends to an $\ell$--adic sheaf on $(X,P)$ (i.e., the continuous representation of 
$\pi_1^{et}(X^o,\,x)$ corresponding to $\scrF$ factors through $\pi_1^{et}((X,\,P),\,x)$).

Let $\scrF$ be a lisse $\overline{\Q}_{\ell}$--sheaf of rank $r$ on $X^o$. This is equivalent to a continuous 
representation $\rho\,:\,\pi_1^{et}(X^o,\,x)\,\longrightarrow\, {\rm GL}_r(K)$ where $K/Q_{\ell}$ is a finite 
extension.

\begin{definition}
A lisse $\overline{\Q}_{\ell}$--sheaf $\scrF$ on $X^o\,\subset\, X$ is said to be bounded by a geometric
branch data $P$ on $X$ if for some (and consequently all) $f\,:\,(Z,\,O)\,\longrightarrow\,(X,\,P)$ \'etale
cover the sheaf $f^*\scrF$ on $Z^o\,=\,f^{-1}(X^o)$ is tame (i.e., ${\rm Sw}(f^*\scrF)\,=\,0$;
see \cite[Section 3]{EK12}). This is equivalent to saying that the ramification of $\scrF$ is
bounded by $f$ in the sense of Drinfeld \cite{Dri12}.
\end{definition}

\begin{pro}
Let $(Z,\,O)\,\longrightarrow\, (X,\,P)$ be an \'etale cover.
Let $\scrF$ be a rank $r$ $\ell$--adic sheaf on $X^o$ bounded by $P$. Then the ramification of $\scrF$ is
bounded (in the sense of Deligne \cite[Definition 3.3]{EK12}) by the effective Cartier divisor $rD$ on $X$
where $D$ is an effective Cartier divisor supported on $X\setminus X^o$ such that $\cO_X(-D)$
is contained in the discriminant ideal $I(D_{Z/X})$.
\end{pro}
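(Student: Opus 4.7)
The plan is to verify Deligne's definition of bounded ramification \cite[Definition 3.3]{EK12} directly, by reducing to the case of smooth curves. That is, for every smooth curve $\phi\,:\,C\,\longrightarrow\, X$ with $\phi^{-1}(X^o)$ dense in $C$ and every closed point $x\,\in\, C$, I aim to show that ${\rm Sw}_x(\phi^*\scrF)\,\leq\, r\cdot {\rm ord}_x(\phi^* D)$.

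First I base change $f\,:\,(Z,\,O)\,\longrightarrow\,(X,\,P)$ along $\phi$. Let $\tilde{C}$ denote the normalization of an irreducible component of $(C\times_X Z)_{\mathrm{red}}$, with $g\,:\,\tilde{C}\,\longrightarrow\, C$ the projection to $C$ and $\tilde{\phi}\,:\,\tilde{C}\,\longrightarrow\, Z$ the projection to $Z$. Then $\tilde{C}$ is smooth, $g$ is finite separable, and $g^*\phi^*\scrF\,=\,\tilde{\phi}^*(f^*\scrF)$. Since $\scrF$ is bounded by $P$, the sheaf $f^*\scrF$ is tame on $Z^o$; consequently $g^*\phi^*\scrF$ has vanishing Swan conductor at every point $y\,\in\,\tilde{C}$ lying over $x$.

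Second I apply the local ramification estimate. Let $L/K$ be the finite separable extension of complete discrete valuation rings induced by $g$ at a preimage $y$ of $x$, and let $d_y$ be its different exponent. Let $V$ be the continuous $\overline{\Q}_\ell$-representation of $G_K$ attached to $\phi^*\scrF$; the previous step tells us that $V|_{G_L}$ is tame. Standard local ramification theory (the Hasse--Arf theorem together with the Herbrand function applied to the break decomposition of $V$) then yields
$${\rm Sw}_K(V)\,\leq\, r\cdot d_y\, .$$

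Finally I pass from the different at $y$ to ${\rm ord}_x(\phi^* D)$. The discriminant of $g$ at $x$ equals the sum of differents over the fibre, so $d_y\,\leq\,\sum_{y'\mapsto x} d_{y'} \,=\, {\rm ord}_x(\phi^* D_{Z/X})$. The hypothesis $\cO_X(-D)\,\subset\, I(D_{Z/X})$ is equivalent to $D_{Z/X}\,\leq\, D$ as Cartier divisors, so ${\rm ord}_x(\phi^* D_{Z/X})\,\leq\, {\rm ord}_x(\phi^* D)$, and combining yields the bound. The main obstacle is the local Swan-conductor inequality in the second step: it is classical for one-dimensional characters, where it can be read off from the ramification filtration, but extending the bound to a general lisse $\overline{\Q}_\ell$-sheaf requires working carefully with the break decomposition of the (only continuous, not finite-image) representation $V$.
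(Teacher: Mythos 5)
Your proposal follows essentially the same route as the paper: the paper's entire proof consists of noting that the pull-back of $\scrF$ to $Z$ is tame and then invoking the proof of \cite[Theorem 3.9]{EK12}, and what you have written is precisely a reconstruction of that argument (curve-testing Deligne's definition, base change of the cover to the test curve, the local bound ${\rm Sw}_K(V)\le r\, d_{L/K}$ for a representation that becomes tame over $L$, and the comparison $D_{Z/X}\le D$). The one step to tighten is $\sum_{y'\mapsto x} d_{y'} = {\rm ord}_x(\phi^*D_{Z/X})$: after base change to $C$ and normalization this is in general only an inequality $\le$ (normalization and the choice of a single component can only decrease the discriminant), and establishing that the different does not increase under base change to curves is the real technical content of the cited argument in \cite{EK12}, so it should not be presented as a formal identity.
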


\begin{proof}
Note that the pull-back of $\scrF$ to $Z$ is tamely ramified. Then by the proof
of \cite[Theorem 3.9]{EK12}, ramification of $\scrF$ is bounded by $rD$.
\end{proof}

\begin{thm}
Let $X^o$ be a smooth quasi-projective variety of dimension $n$.
Let $\scrF$ be an irreducible finite $\ell$--adic sheaf on $X^o$ which extends to a $(n-1)$--separable
geometric projective formal orbifold $(X,\,P)$. Then there exists a smooth curve $C^o$ in $X^o$ such
that $\scrF_{|C^o}$ is irreducible.
\end{thm}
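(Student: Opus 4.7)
The plan is to reduce the statement to a Bertini question about a single Galois cover, and then extract the desired curve from Lemma \ref{bertini-divisor}.

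First, unpack the datum: since $\scrF$ is an irreducible finite $\ell$--adic sheaf on $X^o$ extending to $(X,P)$, it corresponds to an irreducible continuous representation $\rho\,:\,\pi_1^{et}((X,P),\,x)\,\longrightarrow\,\mathrm{Aut}(\Lambda)$ with finite image $G$. Let $f\,:\,(Z,O)\,\longrightarrow\,(X,P)$ be the étale Galois $G$-cover corresponding to $\ker\rho$, so that $Z$ is a normal projective variety on which $G$ acts and $f^*\scrF|_{f^{-1}(X^o)}$ is a trivial local system. The target is a smooth curve $C^o\,\subset\,X^o$ such that $f^{-1}(C^o)$ is connected; once this is achieved, $\pi_1^{et}(C^o,x)\,\twoheadrightarrow\,G$, and composing with the irreducible representation of $G$ on $\Lambda$ gives that $\scrF|_{C^o}$ is irreducible.

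The cover $f$ associated to $\rho$ may a priori differ from the one witnessing $(n-1)$-separability, so I would next form a Galois étale refinement $\tilde f\,:\,(\tilde Z,O)\,\longrightarrow\,(X,P)$ dominating both $f$ and the witness cover. By an argument in the spirit of the Remark after Definition \ref{super-separable}, the codimension bounds $\codim(A_j(\tilde f))\,>\,1$ for $j\,\le\,n-2$ persist on any Galois refinement; if the stronger super-separable version is needed (for applying Lemma \ref{bertini-divisor}, which requires separability on the ramification components), the same propagation argument should carry that property across $\tilde Z\,\to\,Z$. Since $\tilde Z\,\to\,Z$ is surjective, any irreducible hypersurface (or curve) in $\tilde Z$ pushes forward to an irreducible subscheme of $Z$; consequently it suffices to produce a curve $W_0\,\subset\,X$ whose preimage in $\tilde Z$ is irreducible.

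Now apply Lemmas \ref{bertini-normal-finite-mor} and \ref{bertini-divisor} to $\tilde f$, taking the divisor $D\,=\,\tilde f^{-1}(\BL(P))\cup\tilde f^{-1}(X\setminus X^o)$; the hypotheses on rank loci are exactly what the $(n-1)$-separability provides, and by induction on dimension a general complete intersection curve $W_0\,\subset\,X$ (cut out by a general linear subspace of codimension $n-1$) satisfies: $W_0$ avoids $X_{sing}$ (which has codimension at least two because $X$ is normal); $W_0\cap(X\setminus X^o)$ is a finite set; and $\tilde W\,:=\,\tilde f^{-1}(W_0)$ is a smooth irreducible curve in $\tilde Z$ meeting $D$ transversally. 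Setting $C^o\,:=\,W_0\cap X^o$, this is a smooth curve contained in $X^o$, and $\tilde f^{-1}(C^o)\,=\,\tilde W\setminus(\tilde W\cap \tilde f^{-1}(X\setminus X^o))$ is the irreducible smooth curve $\tilde W$ minus finitely many points, hence connected. Pushing down through $\tilde Z\,\to\,Z$, the preimage $f^{-1}(C^o)$ is also connected. Since $X^o$ is disjoint from $\BL(P)$ the restriction of $f$ over $X^o$ is genuinely étale, so $f^{-1}(C^o)\,\longrightarrow\,C^o$ is a connected étale Galois cover with group $G$, which gives the desired surjection $\pi_1^{et}(C^o,x)\,\twoheadrightarrow\,G$ and hence the irreducibility of $\scrF|_{C^o}$.

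The main obstacle is the Bertini step, and specifically the irreducibility of $\tilde W$ (equivalently, connectedness of the general linear section in the normal cover $\tilde Z$): this is precisely what the inductive statement in Lemma \ref{bertini-normal-finite-mor} delivers, but only because the $(n-1)$-separability hypothesis feeds the correct codimension bounds on the rank loci $A_j(\tilde f)$ at every stage of the induction. A secondary technical point is reconciling the separability hypothesis on the ramification components required by Lemma \ref{bertini-divisor} with the bare $(n-1)$-separability hypothesis stated—one expects here either a tacit upgrade to super-separability, or a short propagation argument showing that residual separability at the generic points of the ramification divisor is inherited by Galois refinements.
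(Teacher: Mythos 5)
Your proposal reaches the correct conclusion but by a genuinely different route from the paper. The paper never touches the particular cover trivializing $\scrF$: it applies the orbifold Lefschetz theorem (Theorem \ref{lefschetz-thm}, in its \'etale version) successively, using Lemma \ref{lem-b} to see that $(n-1)$-separability makes each successive hyperplane section again $1$-separable, and thereby produces a curve $C$ with branch data $Q$ and a surjection $\pi_1(C,Q)\longrightarrow\pi_1(X,P)$; composing with the epimorphism $\pi_1(C^o)\longrightarrow\pi_1(C,Q)$ shows $\rho$ restricted to $\pi_1(C^o)$ still has full image, hence is irreducible. The point of that route is that the rank-loci hypotheses only ever need to be verified on the single witness cover supplied by the definition of $(n-1)$-separability: once surjectivity is known at the level of $\pi_1(X,P)$, it handles \emph{every} \'etale cover of $(X,P)$ simultaneously, including the one attached to $\ker\rho$.

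That is exactly where your argument has its one real gap. You need $\codim A_j(\tilde f)>1$ for $j\le n-2$ on a common Galois refinement $\tilde Z$ of the witness cover $Z_0$ and the cover $Z$ trivializing $\scrF$, and you justify this only ``in the spirit of'' the Remark after Definition \ref{super-separable}. That Remark concerns $A_0$ alone, and the propagation is not formal: writing $\tilde f=h\circ g$ with $g\colon\tilde Z\to Z_0$, the rank of $d\tilde f$ drops wherever $dg$ does, so a priori $A_j(\tilde f)$ could acquire divisorial components lying over the large strata of $h$. What actually rescues the step is that $g$ is a morphism between connected \'etale covers of $(X,P)$ both carrying the trivial branch data, hence is itself \'etale (at least in codimension one); then $dg$ is an isomorphism outside a codimension-two set and $A_j(\tilde f)=g^{-1}(A_j(h))$ up to codimension two, giving the required bounds. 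You must either supply this argument or sidestep it as the paper does. Two smaller points: your concern about super-separability and Lemma \ref{bertini-divisor} is unnecessary here --- for a \emph{finite} sheaf you only need $\tilde f^{-1}(W_0)$ to be an irreducible smooth curve, which Lemma \ref{bertini-normal-finite-mor} already gives, since $\tilde f^{-1}(C^o)$ is then that curve minus finitely many points and transversality with the boundary divisor plays no role (it matters only for the tameness bookkeeping in the paper's final theorem); and the smoothness of $C^o=W_0\cap X^o$ should be attributed to ordinary Bertini on the smooth quasi-projective $X^o$.
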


\begin{proof}
Let $x$ be any geometric point of $X^o$. Since $\scrF$ is an irreducible $\ell$--adic on $X^o$, it corresponds 
to an irreducible representation $\rho\,:\,\pi_1^{et}(X^o,x)\,\longrightarrow\, {\rm Aut}(\scrF_x)$. Since the
ramification of $\scrF$ 
is bounded by $P$, $\rho$ factors through $\rho_1\,:\,\pi_1(X,P)\,\longrightarrow\,{\rm Aut}(\scrF_x)$. Now we use
Theorem \ref{lefschetz-thm} successively to obtain a curve $C\,\subset\, X$ and a branch data $Q$ on $C$ such that 
$\BL(Q)\,\subset\,\BL(P)$ and there is surjection $\pi_1(C,\,Q)\,\longrightarrow\, \pi_1(X,\,P)$. Let
$C^o\,=\,C\setminus \BL(C)$, then 
$C^o\,\subset\, X^o$ and there is an epimorphism $\pi_1(C^o)\,\longrightarrow\, \pi_1(C,\,Q)$. Hence $\rho_1$
induces an irreducible representation of $\pi_1(C^o)$ which corresponds to $\scrF_{|C^o}$.
\end{proof}

\begin{thm}
Let $X^o$ be a smooth variety contained as an open subset in a projective normal variety $X$ of dimension $n$, 
and let $P$ be a branch data on $X$ such that $\BL(P)\cap X^o\,=\,\emptyset$. Also assume that $(X,\,P)$ is
super-separable 
(see Definition \ref{super-separable}) geometric projective formal orbifold. There exists a smooth curve 
$C^o\,\subset\,X^o$ such that for any irreducible lisse $\overline{\Q}_{\ell}$--sheaf $\scrF$ on $X^o$ bounded by 
a branch data $P$, the restriction $\scrF_{|C^o}$ is irreducible.
\end{thm}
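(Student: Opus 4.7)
The plan mirrors the proof of the preceding theorem (for finite $\ell$--adic sheaves), upgraded to lisse sheaves via the full strength of super-separability and Lemma~\ref{bertini-divisor}.

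First, I would use super-separability to fix an \'etale Galois cover $f\,:\,(Z,O)\,\longrightarrow\,(X,P)$ with group $G$ satisfying the codimension bounds $\codim_Z A_j(f)\geq 2$ for $j\leq n-2$, $\codim_Z A_{n-1}(f)\geq 1$, together with separability of $f$ on each irreducible component of its ramification divisor. Since $\BL(P)\cap X^o=\emptyset$, the ramification divisor of $f$ lies inside $Z\setminus Z^o=f^{-1}(X\setminus X^o)$, and $f$ is \'etale on the remaining components of $Z\setminus Z^o$. Applying Lemma~\ref{bertini-divisor} with $D=Z\setminus Z^o$ and a general linear subspace $W_0\subset X$ of codimension $n-1$, I obtain a smooth curve $W=f^{-1}(W_0)\subset Z$ meeting $Z\setminus Z^o$ transversally. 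Setting $C^o:=W_0\cap X^o$ and $W^o:=W\cap Z^o$, then $C^o$ is a smooth curve in $X^o$ and $f|_{W^o}\,:\,W^o\,\longrightarrow\,C^o$ is a connected \'etale Galois $G$-cover; crucially, $C^o$ depends only on $(X,P)$.

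Next, for any irreducible lisse $\overline{\Q}_\ell$-sheaf $\scrF$ on $X^o$ bounded by $P$, let $\rho\,:\,\pi_1^{et}(X^o,x)\,\longrightarrow\,{\rm GL}_r(K)$ be the corresponding continuous representation. Since $f^*\scrF$ is tame on $Z^o$, the restriction $\rho|_{\pi_1^{et}(Z^o)}$ factors through the tame fundamental group $\pi_1^{tame}(Z^o)$ of $Z^o$ relative to the divisor $Z\setminus Z^o$; by the formal orbifolds formalism of \cite{formal.orbifolds}, this is $\pi_1^{et}((Z,T_{\scrF}))$ for a tame branch data $T_{\scrF}$ supported on $Z\setminus Z^o$. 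Galois descent over $G$ produces a geometric branch data $\widetilde{P}_{\scrF}$ on $X$ with $\BL(\widetilde{P}_{\scrF})\subset X\setminus X^o$, containing $P$ and agreeing with it on $\BL(P)$, such that $\rho$ factors through $\pi_1^{et}((X,\widetilde{P}_{\scrF}))$. The witness $f$ (after a suitable tame thickening over $Z\setminus Z^o$) still verifies that $(X,\widetilde{P}_{\scrF})$ is super-separable, since the extra branch components lie in $X\setminus X^o$ over loci where the separability condition on $f$ was already imposed.

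With this setup, I apply Theorem~\ref{lefschetz-thm} and its \'etale corollary iteratively to $(X,\widetilde{P}_{\scrF})$, as in the proof of the preceding theorem for finite sheaves. At each of the $n-1$ steps, Lemmas~\ref{bertini-normal-finite-mor} and~\ref{bertini-divisor} ensure super-separability passes to the general hyperplane section, and the \'etale Lefschetz surjectivity holds. Arranging the successive Bertini choices so that the final curve coincides with $C=W_0\cap X$ from the first step, we obtain a branch data $Q_{\scrF}$ on $C$ with $\BL(Q_{\scrF})\subset X\setminus X^o$ and a surjection $\pi_1^{et}((C,Q_{\scrF}),x)\twoheadrightarrow\pi_1^{et}((X,\widetilde{P}_{\scrF}),x)$. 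Composing with the natural epimorphism $\pi_1^{et}(C^o,x)\twoheadrightarrow\pi_1^{et}((C,Q_{\scrF}),x)$ we get an epimorphism $\pi_1^{et}(C^o,x)\twoheadrightarrow\pi_1^{et}((X,\widetilde{P}_{\scrF}),x)$; precomposing with $\rho$ exhibits the monodromy of $\scrF|_{C^o}$ as the image of an irreducible representation through an epimorphism, hence irreducible.

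The hardest part will be the uniformity. As $\scrF$ varies, the branch data $\widetilde{P}_{\scrF}$ varies, and the Zariski-open conditions governing the ``general'' hyperplanes in each iterated Lefschetz step may depend on $\scrF$. One must check that a single $W_0$ --- fixed once using only $(X,P)$ in the first step --- lies in the intersection of all these genericity conditions. Closely related is the explicit construction of $\widetilde{P}_{\scrF}$ by descending the tame data $T_{\scrF}$ under the Galois action, and the verification that super-separability is preserved, both requiring careful use of the formal orbifolds formalism from \cite{formal.orbifolds}.
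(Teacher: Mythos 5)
Your construction of the curve is the same as the paper's: both apply Lemma~\ref{bertini-divisor} to a fixed Galois cover witnessing super-separability and to the boundary divisor, and both aim to show that the monodromy image does not shrink upon restriction. But the group-theoretic route you take afterwards has a genuine gap, in fact two. First, the object $\widetilde{P}_{\scrF}$ you want does not exist in general: a branch data in the sense of \cite{formal.orbifolds} assigns a \emph{finite} Galois extension to each point, whereas the tame monodromy of a lisse $\overline{\Q}_{\ell}$--sheaf along $Z\setminus Z^o$ can be infinite (a single unipotent local monodromy operator already gives an infinite image of tame inertia). So $\rho$ need not factor through $\pi_1^{et}((X,\widetilde{P}_{\scrF}))$ for any single geometric branch data; this is precisely why the preceding theorem for \emph{finite} sheaves cannot be quoted verbatim here. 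Second, even setting that aside, the uniformity you flag at the end is not a technical check but the entire content of the statement: running the iterated Lefschetz argument for a branch data depending on $\scrF$ imposes genericity conditions depending on $\scrF$, and an intersection of infinitely many dense opens can be empty; you give no mechanism forcing a single $W_0$ into all of them.

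The paper closes both gaps at once by replacing the per-sheaf formal orbifold with the full tame fundamental group of the \emph{fixed} cover $Z_1$. Having chosen $C$ once by Lemma~\ref{bertini-divisor} so that $C_1=g^{-1}(C)$ is smooth, connected and transverse to $Z_1\setminus Z_1^o$, one shows that for \emph{every} connected cover $Z_2^o\to Z_1^o$ tame along $Z_1\setminus Z_1^o$ the restriction $C_2=C_1\times_{Z_1}Z_2$ is smooth (tameness plus transversality) and connected (by Fulton--Lazarsfeld connectivity); this single geometric statement, independent of any sheaf, is exactly surjectivity of $\pi_1(C_1^o)\to\pi_1^t(Z_1^o)$. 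Then for each $\scrF$ bounded by $P$ the restriction of $\rho$ to $\pi_1(Z_1^o)$ factors through $\pi_1^t(Z_1^o)$, so $\rho_C(\pi_1(C_1^o))=\rho(\pi_1(Z_1^o))$, and the isomorphism $\pi_1(C^o)/\pi_1(C_1^o)\cong\pi_1(X^o)/\pi_1(Z_1^o)$ induced by the $G$--Galois structure upgrades this to $\rho_C(\pi_1(C^o))=\rho(\pi_1(X^o))$, whence irreducibility. If you want to keep your framework, the fix is to abandon $\widetilde{P}_{\scrF}$ entirely and prove the sheaf-independent surjection onto $\pi_1^t(Z_1^o)$ directly.
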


\begin{proof}
Let $g\,:\,(Z_1,\,O)\,\longrightarrow\, (X,\,P)$ be an \'etale $G$--Galois cover and $Z_1^o\,=\,g^{-1}(X^o)$. 
 
Let $Z_2^o\,\longrightarrow\, Z_1^o$ be a finite irreducible \'etale cover tamely ramified along the divisor
$Z_1\setminus Z_1^o$. Let $f\,:\,Z_2\,\longrightarrow\, X$ be the normalization of $X$ in ${\mathbf k}(Z_2^o)$. 

By applying Lemma \ref{bertini-divisor} we obtain a smooth connected curve $C\,\subset\,X$ such that the 
curve $C_1\,=\,g^{-1}(C)$ is smooth and connected. Moreover $C_1$ intersects $Z_1\setminus Z_1^o$ 
transversally. As $C_2\,\longrightarrow\,C_1$ is tamely ramified along this intersection, $C_2$ is also 
smooth. Note that by \cite[Theorem 2.1(A)]{fulton-lazarsfeld} $C_2\,=\,f^{-1}(C)\,=\,C_1\times_{Z_1}Z_2$ is 
also connected and hence it is irreducible. Since this is true for every cover $Z_2\,\longrightarrow\,Z_1$ 
which is \'etale over $Z_1^o$ and tamely ramified elsewhere, the natural map 
$q_1\,:\,\pi_1(C_1^o)\,\longrightarrow\, \pi_1^t(Z_1^o)$ is surjective.

Let $\scrF$ be given by the continuous irreducible representation $\rho\,:\,\pi_1(X^o)\,\longrightarrow\, 
{\rm GL}(V)$. Since the ramification of $\scrF$ is bounded by $P$, we conclude that $g^*\scrF$ is a tame lisse 
$\overline{\Q}_{\ell}$--sheaf on $Z_1^o$. Hence $g^*\scrF$ which is given by $\rho_{|\pi_1(Z_1^o)}$ factors 
through the epimorphism $\pi_1(Z_1^o,\,x_2)\,\longrightarrow\, \pi_1^t(Z_1^o,\,x_2)$. Let $\rho_1\,:\,
\pi_1^t(Z_1^o)\,\longrightarrow\, {\rm GL}(V)$ be the 
resulting homomorphism.
 
Let $q\,:\,\pi_1(C^o)\,\longrightarrow\, \pi_1(X^o)$ be the homomorphism induced from $C^o\,
\hookrightarrow\, X^o$, and
let $$\rho_C\,=\,\rho\circ q\,:\,\pi_1(C^o,\,x)\,\longrightarrow\, {\rm GL}(V)$$ be the composite homomorphism.
 Then $\rho_C$ restricted to $\pi_1(C_1^o)$ is equal to $\rho_1\circ q_1$. Hence 
\begin{equation}\label{eq:inc}
\rho_C(\pi_1(C^o))\,\supset\,\rho_C(\pi_1(C_1^o))\,=\,\rho_1(\pi_1^t(Z_1^o))\,=\,\rho(\pi_1(Z_1^o)).
\end{equation}

Since $C_1^o\,\longrightarrow\, C^o$ is a connected $G$-Galois \'etale cover obtained by pullback of the
connected $G$--Galois \'etale cover $Z_1^o\,\longrightarrow\, X^o$, the homomorphism $q$ induces an isomorphism 
$$\pi_1(C^o)/\pi_1(C_1^o)\,\longrightarrow\, \pi_1(X^o)/\pi_1(Z_1^o).$$ Hence the image of $\rho_C(\pi_1(C^o,\,x))$ in 
$\rho(\pi_1(X^o))/\rho(\pi_1(Z_1^o))$ is the whole group. Therefore, we obtain that
$\rho_C(\pi_1(C^o))\,=\,\rho(\pi_1(X^o))$ because $\rho_C(\pi_1(C^o))\,\supset \,
\rho(\pi_1(Z_1^o))$ by equation \eqref{eq:inc}. Hence $\rho_C$ is also an irreducible representation.
\end{proof}

\section{Acknowledgements}

We are very grateful to the referee for pointing out an error in the proof of Theorem \ref{thmnl}
in an earlier version. The first-named author is partially supported by
a J. C. Bose Fellowship (JBR/2023/000003).

\section*{}

On behalf of all authors, the corresponding author states that there is no conflict of interest.

\end{document}